\newtheorem{proposition}{Proposition}[section]
\newtheorem{corollary}[proposition]{Corollary}
\newtheorem{theorem}[proposition]{Theorem}
\theoremstyle{definition}
\newtheorem{definition}[proposition]{Definition}
\newtheorem{examples}[proposition]{Examples}
\newtheorem{remark}[proposition]{Remark}
\newcommand{\thlabel}[1]{\label{th:#1}}
\newcommand{\thref}[1]{Theorem~\ref{th:#1}}
\newcommand{\selabel}[1]{\label{se:#1}}
\newcommand{\prlabel}[1]{\label{pr:#1}}
\newcommand{\prref}[1]{Proposition~\ref{pr:#1}}
\newcommand{\colabel}[1]{\label{co:#1}}
\newcommand{\coref}[1]{Corollary~\ref{co:#1}}
\newcommand{\exlabel}[1]{\label{ex:#1}}
\newcommand{\exref}[1]{Example~\ref{ex:#1}}
\newcommand{\delabel}[1]{\label{de:#1}}
\newcommand{\deref}[1]{Definition~\ref{de:#1}}
\newcommand{\eqlabel}[1]{\label{eq:#1}}
\newcommand{\equref}[1]{(\ref{eq:#1})}
\def\ot{\otimes}
\newcommand{\Cc}{\mathcal{C}}
\def\*C{{}^*\hspace*{-1pt}{\Cc}}
\def\text#1{{\rm {\rm #1}}}
\begin{document}

\title[Crossed product of Hopf algebras]
{Crossed product of Hopf algebras}

\author{A. L. Agore}
\address{Faculty of Engineering, Vrije Universiteit Brussel, Pleinlaan 2, B-1050 Brussels, Belgium}
\email{ana.agore@vub.ac.be and ana.agore@gmail.com}

\subjclass[2010]{16T10, 16T05, 16S40}

\keywords{crossed product of Hopf algebras, coquasitriangular
structures}

\begin{abstract} The main properties of the crossed product in the category
of Hopf algebras are investigated. Let $A$ and $H$ be two Hopf
algebras connected by two morphism of coalgebras $\triangleright :
H\ot A \to A$, $f:H\ot H\to A$. The crossed product $A
\#_{f}^{\triangleright} \, H$ is a new Hopf algebra containing $A$
as a normal Hopf subalgebra. Furthermore, a Hopf algebra $E$ is
isomorphic as a Hopf algebra to a crossed product of Hopf algebras
$A \#_{f}^{\triangleright} \, H$ if and only if $E$ factorizes
through a normal Hopf subalgebra $A$ and a subcoalgebra $H$ such
that $1_{E} \in H$. The universality of the construction, the
existence of integrals, commutativity or involutivity of the
crossed product are studied. Looking at the quantum side of the
construction we shall give necessary and sufficient conditions for
a crossed product to be a coquasitriangular Hopf algebra. In
particular, all braided structures on the monoidal category of $A
\#_{f}^{\triangleright} \, H$-comodules are explicitly described
in terms of their components. As an example, the braidings on a
crossed product between $H_{4}$ and $k[C_{3}]$ are described in
detail.
\end{abstract}
\maketitle

\section*{Introduction}
The crossed product is a fundamental construction in mathematics.
It was first introduced in group theory related to the famous
extension problem of Holder: any extension $(E, i, \pi)$ of a
group $H$ by a group $G$ is equivalent to a crossed product
extension $(H \#^{\alpha}_{f} \, G, i_H, \pi_G)$. The construction
of crossed products of groups has served as a model for later
generalizations at the level of groups acting on rings
\cite{pasman}, Hopf algebras acting on k-algebras \cite{BCM}, von
Neumann algebras \cite{nakagami}, etc.

The crossed product $A \#^{\triangleright}_{f} \, H$ of a Hopf
algebra $H$ acting on a $k$-algebra $A$ was introduced
independently in \cite{BCM} and \cite{DT} as a generalization of
the crossed product of groups acting on $k$-algebras. It has only
an algebra structure and was studied mainly as an algebra
extension of $A$, being an essential tool in Hopf-Galois
extensions theory as it is well known that Hopf-Galois extensions
with normal basis are equivalent to crossed products with
invertible cocycle. Many algebraic properties of the crossed
product of a Hopf algebra $H$ acting on a $k$-algebra $A$ such as
semisimplicity, semiprimeness, etc. were studied in this setting
(\cite{Chin}, \cite{Quinn}). If, in addition, $A$ and $H$ are Hopf
algebras and the cocycle $f:H\ot H\to A$ and the action
$\triangleright : H\ot A \to A$ are coalgebra maps satisfying two
compatibility conditions then it was proved in \cite[Example 2.5,
2)]{am1} that the crossed product $A \#^{\triangleright}_{f} \, H$
has a natural Hopf algebra structure which we call \textit{the
crossed product of Hopf algebras}. An important feature of the
crossed product of Hopf algebras is the following: a Hopf algebra
$E$ is isomorphic as a Hopf algebra to a crossed product of Hopf
algebras if and only if $E$ factorizes through a normal Hopf
subalgebra and a subcoalgebra containing the unit of $E$
(\thref{fact}).

It turns out that the crossed product of Hopf algebras $A
\#^{\triangleright}_{f} \, H$  is also a special case of the so
called ''cocycle bicrossproduct bialgebra'' constructed for
bialgebras by Majid and Soibelman in \cite[Theorem 2.9]{MS} if we
let the cocylce cross coproduct to be the trivial one, i.e. the
coalgebra structure is the tensor product of coalgebras, and the
action $\triangleright$ and the cocycle f to be coalgebra maps. A
remarkable example is the quantum Weyl algebra which was shown to
have a cocycle bicrossproduct bialgebra structure in
\cite[Corollary 3.4]{MS}. Later on, an antipode for the cocycle
bicrossproduct bialgebra was constructed by Andruskiewitsch and
Devoto in \cite{AD} where they study short exact sequences for
quantum groups.

The paper is organized as follows. In Section $1$ we set up the
notations and conventions to be used throughout and introduce some
preliminary results needed later on. First we recall the
construction of the crossed product of two Hopf algebras
(\deref{definitiacr}) presented as a special case of the unified
product introduced recently in \cite{am1}. Basic examples of
crossed product of Hopf algebras are given in \exref{1.2}: in
particular Molnar's smash product of two Hopf algebras \cite{Mo}
is a special case of the crossed product. Moreover, given $(H, G,
f, \triangleright)$ a crossed system of groups (\cite{am2}) we
show that the group algebra of the crossed product of groups $H
\#_{\alpha}^{f} \, G$ can be built out from the group algebras
$kH$ and $kG$ in a natural way as a crossed product of Hopf
algebras (\exref{1.2}).

In Section $2$ we prove the main property that characterizes the
crossed product of Hopf algebras by showing that a Hopf algebra
$E$ is isomorphic to a crossed product of Hopf algebras if and
only if $E$ factorizes through a normal Hopf subalgebra and a
subcoalgebra containing $1_{E}$ (\thref{fact}). Two universal
properties for the crossed product of Hopf algebras are also
proved. In Section $3$ further properties of the crossed product
of Hopf algebras are studied: existence of integrals,
commutativity, semisimplicity and involutivity. For instance, in
\prref{3.1} necessary and sufficient conditions for a crossed
product of Hopf algebras to be commutative are given. In
\prref{3.3} we construct right integrals for the crossed product
of Hopf algebras and, as a consequence, we prove that a crossed
product $A \#^{\triangleright}_{f} \, H$ is semisimple if and only
if both Hopf algebras $A$ and $H$ are semisimple.

In the last section necessary and sufficient conditions for a
crossed product to be a coquasitriangular (braided) Hopf algebras
are given in \thref{4.5}. In particular, all braided structures on
the category of right $A \#_{f}^{\triangleright} \, H$-comodules
are explicitly described in terms of their components. As a
further motivation we recall that coquasitriangular Hopf algebras
provide solutions to the quantum Yang-Baxter equation. In
\exref{exemplulung}, $2)$ an explicit example of a crossed product
between $H_{4}$ and $k[C_{3}]$ is constructed and, using
\thref{4.5}, the braidings on that
 crossed product are described.

\section{Preliminaries}
Throughout this paper k denotes an arbitrary field. Unless
specified otherwise, all algebras, coalgebras, tensor products and
homomorphisms are over $k$. For a coalgebra $C$, we use Sweedler's
$\Sigma$-notation: $\Delta(c) = c_{(1)}\ot c_{(2)}$,
$(I\ot\Delta)\Delta(c) = c_{(1)}\ot c_{(2)}\ot c_{(3)}$, etc with
summation understood. For a $k$-linear map $f: H \ot H \to A$ we
denote $f(g, \, h) = f (g\ot h)$.\\
Recall that a Hopf algebra $H$ is called (co)semisimple if the
underlying (co)algebra structure is (co)semisimple. An element $t$
in a finite-dimensional Hopf algebra $H$ is called right integral
in $H$ if $tx = \varepsilon(x)t$ for all $x \in H$. Maschke's
theorem states that a finite-dimensional Hopf algebra $H$ is
semisimple if and only if $\varepsilon(t) \neq 0$, for some
left/right integral $t \in H$. For all unexplained notations or
definitions we refer the reader to \cite{M} or \cite{Sw}.

\subsection*{Crossed product of Hopf algebras}\selabel{1.1}
Let $H$ be a Hopf algebra, $A$ a $k$-algebra and two $k$-linear
maps $\triangleright: H \otimes A \rightarrow A$, $f: H \otimes H
\rightarrow A$ such that
\begin{eqnarray}
\eqlabel {0} h \triangleright 1_{A} &{=}& \varepsilon_{H}(h)1_{A}\\
\eqlabel{1} 1_H \rhd a
&{=}& a\\ \eqlabel{2} h \rhd (ab) &{=}& (h_{(1)} \rhd a) (h_{(2)} \rhd b)\\
\eqlabel{3} f(h, 1_{H}) &{=}& f(1_{H}, h) =
\varepsilon_{H}(h)1_{A} \eqlabel{4}
\end{eqnarray}
for all $h\in H$, $a$, $b\in A$. The \textit{crossed product }$A
\#_{f}^{\triangleright} \, H$ of $A$ with $H$ is the $k$-module
$A\ot H$ with the multiplication given by
\begin{equation}\eqlabel{001}
(a \# h) \cdot (c \# g):= a (h_{(1)}\triangleright c)
f\bigl(h_{(2)} , g_{(1)}\bigl) \, \# \, h_{(3)}g_{(2)}
\end{equation}
for all $a$, $c\in A$, $h$, $g\in H$, where we denoted $a\ot h$ by
$a\# h$. It can be proved \cite[Lemma 7.1.2]{M} that $A
\#_{f}^{\triangleright} \, H$ is an associative algebra with
identity element $1_A \# 1_H$ if and only if the following two
compatibility conditions hold:
\begin{eqnarray}
\eqlabel{-1} [g_{(1)} \triangleright (h_{(1)} \triangleright a)]
f\bigl(g_{(2)} , \, h_{(2)} \bigl) &{=}& f(g_{(1)}, \, h_{(1)})
\bigl ( (g_{(2)} h_{(2)}) \triangleright a \bigl)\\  \eqlabel{tmc}
\bigl(g_{(1)} \triangleright f(h_{(1)}, \, l_{(1)})\bigl)
f\bigl(g_{(2)}, \, h_{(2)} l_{(2)} \bigl) &{=}& f(g_{(1)}, \,
h_{(1)}) f(g_{(2)}h_{(2)}, \, l)\eqlabel{cc}
\end{eqnarray}
for all $a\in A$, $g$, $h$, $l\in H$. The first compatibility is
called the \textit{twisted module condition} while \equref{cc} is
called the \textit{cocycle condition}.

Assume now that $A$ is also a Hopf algebra and the maps $\rhd :
H\ot A \to A$ and $f: H\ot H \to A $ are coalgebra morphisms. Then
the crossed product $A \#_{f}^{\triangleright} \, H$ has a Hopf
algebra structure with the coalgebra structure given by the tensor
product of coalgebras if and only if the following two
compatibility conditions hold:
\begin{equation}\eqlabel{co1}
g_{(1)} \otimes g_{(2)} \triangleright a = g_{(2)} \otimes g_{(1)}
\triangleright a
\end{equation}
\begin{equation}\eqlabel{co3}
g_{(1)} h_{(1)} \otimes f(g_{(2)}, \, h_{(2)}) = g_{(2)} h_{(2)}
\otimes f(g_{(1)}, \, h_{(1)})
\end{equation}
for all $g$, $h \in H$ and $a$, $b \in A$ (see \cite[Theorem
2.4]{am1} and \cite[Examples 2.5 (2)]{am1}). Remark that both
relations \equref{co1} and \equref{co3} are trivially fulfilled if
$H$ is cocommutative. As a special case by considering
$\triangleleft$ to be the trivial action in \cite[Proposition
2.8]{am1} we obtain that the antipode of the Hopf algebra $A
\#_{f}^{\triangleright} \, H$ is given by:
\begin{equation}\eqlabel{antipod}
S(a \# g) := \Bigl(S_{A}\bigl[f\bigl(S_{H}(g_{(2)}), \,
g_{(3)}\bigl)\bigl] \# S_{H}(g_{(1)})\Bigl) \cdot \bigl(S_{A}(a)
\# 1_{H}\bigl)
\end{equation}
for all $a\in A$ and $g\in H$.

\begin{definition}\delabel{definitiacr}
A quadruple $(A, H, \triangleright, f)$, where $A$ and $H$ are
Hopf algebras and $\triangleright: H \otimes A \rightarrow A$, $f:
H \otimes H \rightarrow A$ are two coalgebra maps such that
relations \equref{0} - \equref{3} and \equref{-1} - \equref{co3}
are fulfilled is called \textit{crossed system of Hopf algebras}.
The corresponding crossed product of Hopf algebras associated to
the crossed system $(A, H, \triangleright, f)$ will be denoted by
$A \#^{\triangleright}_{f} \, H$.
\end{definition}

\begin{examples}\exlabel{1.2}
$1)$ Let $A$, $H$ be two Hopf algebras and $\triangleright$, $f$
be the trivial action respectively the trivial cocycle, that is:
$a \triangleright h = \varepsilon(a)h$ and $f(g , h) =
\varepsilon(g) \varepsilon(h)$ for all $a \in A$ and $g, h \in H$.
Then the associated crossed product is exactly the tensor product
of Hopf algebras $A \otimes H$.

$2)$ Let $A$, $H$ be two Hopf algebras and $f$ be the trivial
action. If $A$ is an $H$-module algebra via the coalgebra map
$\triangleright$ and the relation \equref{co3} is fulfilled then
the crossed product has the algebra structure given by the smash
product (\cite{Mo}) and it will be denoted by $A
\#^{\triangleright} H$.

$3)$ First recall that for any set $X$, the free module of basis
$X$, $kX$ can be made into a coalgebra by considering the
structure given by: $\Delta(x) = x \otimes x$ and $\varepsilon(x)
= 1$, for all $x\in X$. This will be the coalgebra structure we
consider on $kG$, $kH$ as well as on $k[H \#^{\triangleright}_{f}
\, G]$. In this setting, if $(H, G, f, \triangleright)$ is a
normalized crossed system of groups (see \cite{am2} for
definitions and further properties) then $(kH, kG,$
$\widetilde{f}, \widetilde{\triangleright})$ is a crossed system
of Hopf algebras where $\widetilde{f}$ and
$\widetilde{\triangleright}$ are obtained by linearizing the maps
$f$ and $\triangleright$. Furthermore, there exists an isomorphism
of Hopf algebras: $\varphi: k[H \times^{\triangleright}_{f} \, G]
\rightarrow kH \#^{\widetilde{\triangleright}}_{\widetilde{f}} \,
kG$ given by $\varphi\bigl((h, g)\bigl) = h \# g$ for all $h \in
H$, $g \in G$ and extended linearly to $k[H
\times^{\triangleright}_{f} \, G]$. Moreover, since coalgebra maps
between two group algebras are induced by maps between the
corresponding groups of the group algebras then any crossed
product of Hopf algebras between group algebras arises as above.

$4)$ Let $A$ and $H$ be two Hopf algebras such that $H$ is
cocommutative and $\gamma: H \rightarrow A$ a unitary coalgebra
map. Define:
$$
\triangleright:= \triangleright_{\gamma}: H \otimes A \rightarrow A ,
\qquad h\triangleright a := \gamma(h_{(1)})a\gamma^{-1}(h_{(2)})
$$
$$
f := f_{\gamma}: H \otimes H \rightarrow A, \qquad f(h,\,g)
= \gamma(h_{(1)}) \gamma(g_{(1)}) \gamma^{-1}(h_{(2)}g_{(2)})
$$
Then, using the cocommutativity of $H$, is a routine computation
to check that $(A, H, \triangleright_{\gamma}, f_{\gamma})$ is a
crossed system of Hopf algebras and, moreover, the map given by:
$$
\varphi : A \#^{\triangleright}_{f} \, H \to A \ot H, \quad
\varphi (a \# h ) = a \gamma (h_{(1)}) \ot h_{(2)}
$$
for all $a\in A$ and $h\in H$ is an isomorphism of Hopf algebras.
\end{examples}

\section{Basic properties of crossed products}

In this section we collect some fundamental properties of crossed
products of Hopf algebras. For a crossed product $A
\#^{\triangleright}_{f} \, H$ we can define the following
$k$-linear maps:
$$
i_A : A \to A \#^{\triangleright}_{f} \, H, \quad i_A (a) = a  \#
1_H, \qquad \pi_A : A \#^{\triangleright}_{f} \, H \to A, \quad
\pi_A (a \# h) = \varepsilon_H (h) a
$$
$$
i_H : H \to A \#^{\triangleright}_{f} \, H, \quad i_H (h) = 1_A \#
h, \qquad \pi_H : A \#^{\triangleright}_{f} \, H \to H, \quad
\pi_H (a \# h) = \varepsilon_A (a) h
$$
for all $a \in A$, $h\in H$. It is straightforward to see that
$i_{H}$ and $\pi_{A}$ are coalgebra maps while $i_{A}$ and
$\pi_{H}$ are Hopf algebra maps. The crossed product $A
\#^{\triangleright}_{f} \, H$ fits into the following exact
sequence:
\begin{eqnarray*}
\xymatrix{ 0 \ar[r] & A \ar[r]^{i_{A}} & {A
\#^{\triangleright}_{f} \, H} \ar[r]^{\pi_{H}} & H \ar[r] & 0 }
\end{eqnarray*}
such that $\pi_{H}: A \#^{\triangleright}_{f} \, H \rightarrow H$
admits a section which is a coalgebra map and $i_{A}$ admits a
retraction which is also a coalgebra map.

Recall that a Hopf subalgebra $A$ of a Hopf algebra $H$ is called
\textit{normal} if $x_{(1)}aS(x_{(2)}) \in A$ and
$S(x_{1})ax_{(2)} \in A$ for all $x \in H$, $a \in A$. We say that
a Hopf algebra $E$ factorizes through a Hopf subalgebra $A$ and a
subcoalgebra $H$ if the multiplication map $u: A\otimes H
\rightarrow E$, $u(a \otimes h) = ah $, for all $a\in A$, $h\in H$
is bijective.

The next theorem describes the main property that characterizes
the crossed product of Hopf algebras.
\begin{theorem}\thlabel{fact}
A Hopf algebra $E$ is isomorphic as a Hopf algebra to a crossed
product $A \#_{f}^{\triangleright} \, H$ of Hopf algebras if and
only if $E$ factorizes through a normal Hopf subalgebra $A$ and a
subcoalgebra $H$ such that $1_{E} \in H$.
\end{theorem}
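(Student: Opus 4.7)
The plan is to prove the two implications separately.

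$(\Rightarrow)$: Identify $E$ with $A \#_{f}^{\triangleright} H$ via the given isomorphism. Then $i_{A}(A) = A \# 1_{H}$ is a Hopf subalgebra (since $i_{A}$ is a Hopf algebra map) and $i_{H}(H) = 1_{A} \# H$ is a subcoalgebra containing $i_{H}(1_{H}) = 1_{E}$. Using \equref{001} together with \equref{0} and \equref{3} one gets $(a \# 1_{H})(1_{A} \# h) = a \# h$, so the multiplication $i_{A}(A) \otimes i_{H}(H) \to E$ is the identity on the $k$-module $A \otimes H$, hence bijective. The only nontrivial point is that $i_{A}(A)$ is normal in $E$: for $x = c \# g \in E$ and $b \# 1_{H} \in i_{A}(A)$ one expands $x_{(1)} (b \# 1_{H}) S(x_{(2)})$ using \equref{001} and the antipode formula \equref{antipod}; the twisted module condition \equref{-1} and the cocycle condition \equref{cc} make the $H$-components telescope, leaving an element of $A \# 1_{H}$.

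$(\Leftarrow)$: Assume $E$ factorizes through a normal Hopf subalgebra $A$ and a subcoalgebra $H$ with $1_{E} \in H$, via the multiplication isomorphism $u : A \otimes H \to E$. Since $A$ and $H$ are subcoalgebras of $E$, $u$ is in fact a coalgebra isomorphism when $A \otimes H$ carries the tensor coproduct. Define the retractions $\pi_{A} : E \to A$, $\pi_{A}(ah) = \varepsilon(h) a$, and $\pi_{H} : E \to H$, $\pi_{H}(ah) = \varepsilon(a) h$; both are coalgebra maps. Normality of $A$ implies that $A^{+} E$ is a Hopf ideal of $E$, so $\bar E := E / A^{+} E$ is a quotient Hopf algebra; the composition $H \hookrightarrow E \twoheadrightarrow \bar E$ is a coalgebra isomorphism, and I use it to endow $H$ with the Hopf algebra structure of multiplication $h \cdot_{H} g := \pi_{H}(hg)$, unit $1_{E}$ and antipode $S_{H} := \pi_{H} \circ S_{E}|_{H}$, which makes $\pi_{H}$ a Hopf algebra map.

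Next define the action and cocycle by
\[
 h \triangleright a := \pi_{A}(h a), \qquad f(h,g) := \pi_{A}(h g),
\]
which are coalgebra maps into $A$ because $\pi_{A}$ is. The key decomposition identity, checked directly on $x = ah$, is $x = \pi_{A}(x_{(1)}) \, \pi_{H}(x_{(2)})$ for every $x \in E$; specialising to $x = h a$ and $x = h g$ yields the two structural identities
\[
 h a = (h_{(1)} \triangleright a)\, h_{(2)}, \qquad h g = f(h_{(1)}, g_{(1)})\, (h_{(2)} \cdot_{H} g_{(2)}),
\]
as equalities in $E$. Axioms \equref{0}--\equref{3} are immediate from the definitions, the unit axioms in $E$ and the equality $\pi_{A}(1_{E}) = 1_{A}$. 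The twisted module condition \equref{-1} and the cocycle condition \equref{cc} follow from associativity in $E$, applied to $(gh)a = g(ha)$ and $(gh)l = g(hl)$ and expanded with the structural identities. Finally, \equref{co1} and \equref{co3} translate, under the coalgebra iso $u$, into precisely the bialgebra compatibility between multiplication and comultiplication of $E$, which holds by hypothesis.

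To conclude, the map $\phi : A \#^{\triangleright}_{f} H \to E$, $\phi(a \# h) := a h$, coincides with $u$ and is therefore a coalgebra isomorphism; using the two structural identities and associativity, a direct expansion of $\phi\bigl((a \# h)(b \# g)\bigr)$ shows it equals $(ah)(bg) = \phi(a \# h)\,\phi(b \# g)$, so $\phi$ is also an algebra map, hence a Hopf algebra isomorphism. The hardest step is equipping $H$ with a Hopf algebra structure compatible with every crossed product axiom; this is precisely where normality of $A$ is used, through the quotient $E / A^{+} E$. A secondary difficulty is the Sweedler-index bookkeeping required to deduce \equref{-1}, \equref{cc}, \equref{co1} and \equref{co3} from the structural identities and the bialgebra axiom for $E$.
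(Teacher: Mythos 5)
Your overall strategy is sound and, for the forward implication, coincides with the paper's: the factorization is immediate from $(a\#1_H)(1_A\#h)=a\#h$, and the real work is the normality of $A\#1_H$, done by expanding the two adjoint actions. One concrete correction there: the identities that make the $H$-legs collapse to $S_H(h_{(i)})h_{(i+1)}$ and hence to $1_H$ are the coalgebra compatibilities \equref{co1} and \equref{co3} (together with \equref{2}), which are what allow you to permute Sweedler legs until the matching factors sit next to each other; the twisted module and cocycle conditions \equref{-1}, \equref{cc} that you invoke do not accomplish this, and without \equref{co1}--\equref{co3} the telescoping you describe cannot be carried out. Note also that the paper's definition of normality requires checking both $x_{(1)}aS(x_{(2)})\in A$ and $S(x_{(1)})ax_{(2)}\in A$, and the paper verifies both.

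For the converse your route is genuinely different from, and more self-contained than, the paper's. The paper invokes \cite[Theorem 2.7]{am1}, which already equips $E$ with a unified-product structure (coalgebra maps $\triangleright$, $\triangleleft$, $f$ and a multiplication on $H$ satisfying all compatibilities), and then uses normality only to compute $u^{-1}(ha)=h_{(1)}aS(h_{(2)})\otimes h_{(3)}$, which forces the right action $\triangleleft$ to be trivial and collapses the unified product to a crossed product. You rebuild this machinery directly: the quotient $E/A^{+}E$ furnishes the Hopf algebra structure on $H$ and makes $\pi_H$ multiplicative, and that multiplicativity, i.e.\ $\pi_H(ha)=\varepsilon(a)h$, is exactly the paper's ``$\triangleleft$ is trivial''. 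Be aware that this step is indispensable and should be made explicit: specializing your decomposition $x=\pi_A(x_{(1)})\pi_H(x_{(2)})$ to $x=ha$ only yields $ha=\pi_A(h_{(1)}a_{(1)})\pi_H(h_{(2)}a_{(2)})$, and you need $\pi_H(h_{(2)}a_{(2)})=\varepsilon(a_{(2)})h_{(2)}$ before you can write $ha=(h_{(1)}\triangleright a)h_{(2)}$. Granting that, deriving \equref{0}--\equref{3}, \equref{-1}, \equref{cc} from associativity and \equref{co1}, \equref{co3} from multiplicativity of $\Delta_E$ is the standard (if lengthy) bookkeeping you acknowledge. Your approach buys independence from the unified-product formalism of \cite{am1}; the paper's buys brevity by outsourcing exactly those verifications.
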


\begin{proof}
Suppose first that $E \cong A \#_{f}^{\triangleright} \, H$. Then
$A \#_{f}^{\triangleright} \, H$ factorizes through $A$ and $H$ as
the map $u: A \otimes H \rightarrow A \#_{f}^{\triangleright} \,
H$ defined by $u(a \otimes h) = i_A (a) i_H(h) = a \# h$ is of
course bijective. It remains to prove that $A$ is a normal Hopf
subalgebra of $A \#_{f}^{\triangleright} \, H$. For any $b \in A$
and $a \# h \in A \#_{f}^{\triangleright} \, H$ we have:

\begin{eqnarray*}
&&S(a_{(1)} \# h_{(1)})(b \# 1) (a_{(2)} \# h_{(2)}) =\\
&{=}&\Bigl[S_{A}\bigl(f(S_{H}(h_{(2)}), h_{(3)})\bigl) \#
S_{H}(h_{(1)})\Bigl]\bigl(S_{A}(a_{(1)}) \# 1\bigl)(b \# 1)\\
&{=}& S_{A}\Bigl(f\bigl(S_{H}(h_{(5)},
h_{(6)})\bigl)\Bigl)\Bigl(S_{H}(h_{(4)}) \triangleright \bigl(
S_{A}(a_{(1)})b\bigl)\Bigl) \bigl(S_{H}(h_{(3)}) \triangleright
a_{(2)}\bigl) f \bigl(S_{H}(h_{(2)}), h_{(7)}\bigl)\\
&& \# S_{H}(h_{(1)})h_{(8)}\\
&{=}& S_{A}\Bigl(f\bigl(S_{H}(h_{(5)},
h_{(6)})\bigl)\Bigl)\Bigl(S_{H}(h_{(4)}) \triangleright \bigl(
S_{A}(a_{(1)})b\bigl)\Bigl) \bigl(S_{H}(h_{(3)}) \triangleright
a_{(2)}\bigl) \underline{f \bigl(S_{H}(h_{(1)})_{(1)}, h_{(7)(1)}\bigl)}\\
&& \# \underline{S_{H}(h_{(1)})_{(2)}h_{(7)(2)}}\\
&\stackrel{\equref{co3}}{=}& S_{A}\Bigl(f\bigl(S_{H}(h_{(5)},
h_{(6)})\bigl)\Bigl)\Bigl(S_{H}(h_{(4)}) \triangleright \bigl(
S_{A}(a_{(1)})b\bigl)\Bigl) \bigl(S_{H}(h_{(3)}) \triangleright
a_{(2)}\bigl) f \bigl(S_{H}(h_{(1)}), h_{(8)}\bigl)\\
&& \# S_{H}(h_{(2)})h_{(7)}\\
&{=}& S_{A}\Bigl(f\bigl(S_{H}(h_{(4)},
h_{(5)})\bigl)\Bigl)\underline{\Bigl(S_{H}(h_{(3)})_{(1)}
\triangleright \bigl( S_{A}(a_{(1)})b\bigl)\Bigl)
\bigl(S_{H}(h_{(3)})_{(2)} \triangleright
a_{(2)}\bigl)} f \bigl(S_{H}(h_{(1)}), h_{(7)}\bigl)\\
&& \# S_{H}(h_{(2)})h_{(6)}\\
&\stackrel{\equref{2}}{=}& S_{A}\Bigl(f\bigl(S_{H}(h_{(4)},
h_{(5)})\bigl)\Bigl)\underline{\Bigl(S_{H}(h_{(3)}) \triangleright
\bigl( S_{A}(a_{(1)})ba_{(2)}\bigl)\Bigl)}f\bigl(S_{H}(h_{(1)}),
h_{(7)}\bigl)\# \underline{S_{H}(h_{(2)})}h_{(6)}
\end{eqnarray*}
\begin{eqnarray*}
&\stackrel{\equref{co1}}{=}&
\underline{S_{A}\Bigl(f\bigl(S_{H}(h_{(3)})_{(1)},
h_{(5)(1)})\bigl)\Bigl)}\Bigl(S_{H}(h_{(2)}) \triangleright \bigl(
S_{A}(a_{(1)})ba_{(2)}\bigl)\Bigl)f\bigl(S_{H}(h_{(1)}),
h_{(7)}\bigl)\\
&& \# \underline{S_{H}(h_{(3)})_{(2)}h_{(5)(2)}}\\
&\stackrel{\equref{co3}}{=}& S_{A}\Bigl(f\bigl(S_{H}(h_{(3)}),
h_{(6)})\bigl)\Bigl)\Bigl(S_{H}(h_{(2)}) \triangleright \bigl(
S_{A}(a_{(1)})ba_{(2)}\bigl)\Bigl)f\bigl(S_{H}(h_{(1)}),
h_{(7)}\bigl)\# S_{H}(h_{(4)})h_{(5)}\\
&{=}& S_{A}\Bigl(f\bigl(S_{H}(h_{(3)}),
h_{(4)})\bigl)\Bigl)\Bigl(S_{H}(h_{(2)}) \triangleright \bigl(
S_{A}(a_{(1)})ba_{(2)}\bigl)\Bigl)f\bigl(S_{H}(h_{(1)}),
h_{(5)}\bigl)\# 1\\
\end{eqnarray*}
and the last line is obviously in $A$. We also have:
\begin{eqnarray*}
&&(a_{(1)} \# h_{(1)})(b \# 1)S(a_{(2)} \# h_{(2)}) =
\bigl(a_{(1)}(h_{(1)} \triangleright b) \# h_{(2)}\bigl)S(a_{(2)}
\# h_{(3)})\\
&{=}& \bigl(a_{(1)}(h_{(1)} \triangleright b) \#
h_{(2)}\bigl)\Bigl(S_{A}[f\bigl(S_{H}(g_{(4)}), \, g_{(5)}\bigl)]
\# S_{H}(g_{(3)})\Bigl) \bigl(S_{A}(a_{(2)}) \# 1_{H}\bigl)\\
&{=}& \Bigl(a_{(1)}(h_{(1)} \triangleright b)\bigl(h_{(2)}
\triangleright S_{A}[f\bigl(S_{H}(g_{(7)}), \,
g_{(8)}\bigl)]\bigl) f\bigl(h_{(3)}, S_{H}(h_{(6)})\bigl) \#
h_{(4)} S_{H}(h_{(5)})\Bigl)\\
&& \bigl(S_{A}(a_{(2)}) \# 1_{H}\bigl)\\
&{=}& \Bigl(a_{(1)}(h_{(1)} \triangleright b)\bigl(h_{(2)}
\triangleright S_{A}[f\bigl(S_{H}(g_{(5)}), \,
g_{(6)}\bigl)]\bigl) f\bigl(h_{(3)}, S_{H}(h_{(4)})\bigl) \# 1
\Bigl)\bigl(S_{A}(a_{(2)}) \# 1_{H}\bigl)
\end{eqnarray*}
the last line is again in $A$ so it follows that $A$ is a normal
Hopf subalgebra in $A \#_{f}^{\triangleright} \, H$ as desired.

Conversely, assume now that a Hopf algebra $E$ factorizes through
a normal Hopf subalgebra $A$ and a subcoalgebra $H$ such that
$1_{E} \in H$. It follows from \cite[Theorem 2.7]{am1} that $E$ is
isomorphic to a unified product with the multiplication, the
cocycle and the actions given by the formulas:
\begin{eqnarray*}
\cdot : H \otimes H \rightarrow h , \qquad \cdot &:=&
(\varepsilon_{A} \otimes Id) \circ \nu\\
f: H \otimes H \rightarrow A , \qquad f &:=& (Id \otimes
\varepsilon_{H}) \circ \nu\\
\triangleright: H \otimes A \rightarrow A , \qquad \triangleright
&:=& (Id \otimes \varepsilon_{H}) \circ \mu \\
\triangleleft: H \otimes A \rightarrow H , \qquad \triangleleft
&:=& (\varepsilon_{A} \otimes Id) \circ \mu
\end{eqnarray*}
where the coalgebra maps $\mu : H\otimes A \to A\ot H$ and $\nu :
H\ot H \to A\ot H$ are defined by:
$$
\mu (h\ot a) := u^{-1} (ha), \qquad  \nu (h\ot g) := u^{-1} (hg)
$$
for all $a \in A$ and $h$, $g \in H$. Since $A$ is a normal Hopf
subalgebra of $E$ we get:
$$\mu : H\otimes A \to A\ot H, \quad \mu (h\ot a) := u^{-1} (ha) =
h_{(1)}aS(h_{(2)}) \otimes h_{(3)}.$$ It follows from here that:
$h \triangleleft a = \varepsilon_{A}(a)h$, thus $\triangleleft$ is
trivial and so the unified product is a crossed product of Hopf
algebras (\cite[Examples 2.5 (2)]{am1}).

\end{proof}

Remark however that in the foregoing result the subcoalgebra $H$
is in fact a Hopf algebra but not a Hopf subalgebra of $E$. If a
Hopf algebra $E$ factorizes through two Hopf subalgebras $A$ and
$H$, with $A$ normal in $E$, then $E$ is isomorphic as a Hopf
algebra to a smash product as proved in \cite[Proposition
2.5]{BS}.

A classification result for crossed products of Hopf algebras can
be derived from \cite[Theorem 3.4]{am1} where a more general
approach to the classification problem in the context of unified
products is given. This result is the counterpart at the level of
Hopf algebras of the classical Schreier theorem on group
extensions. We recall from \cite{am1} the notion of coalgebra lazy
$1$-cocycle: given $A$ and $H$ two Hopf algebras, a coalgebra map
$u: H \rightarrow A$ is called a coalgebra lazy $1$-cocyle if
$u(1_H) = 1_A$ and the following compatibility holds:
\begin{equation}\eqlabel{lazy}
h_{(1)} \otimes
u(h_{(2)}) = h_{(2)} \otimes u(h_{(1)})
\end{equation}
for all $h\in H$.

\begin{proposition}
Let $A$, $H$ be two Hopf algebras and $A \#^{\triangleright}_{f}
\, H$, $A \#^{\triangleright'}_{f'} \, H$ be two crossed products
of Hopf algebras. The following are equivalent:\\
$(1)$ $A \#^{\triangleright}_{f} \, H \approx A
\#^{\triangleright'}_{f'} \, H$ (isomorphism of
Hopf algebras, left $A$-modules and right $H$-comodules);\\
$(2)$ There exists a coalgebra lazy $1$-cocyle $u: H \rightarrow
A$ such that:
\begin{enumerate}
\item[(1)] $h \triangleright' a = u^{-1}(h_{(1)})(h_{(2)}
\triangleright a)u(h_{(3)})$\\
\item[(2)] $f'(h,\,k) = u^{-1}(h_{(1)}) \bigl(h_{(2)}
\triangleright u^{-1}(k_{(1)}) f(h_{(3)},\,k_{(2)})\bigl)
u(h_{(4)}k_{(3)})$
\end{enumerate}
for all $a\in A$ and $h$, $g \in H$.

In this case we shall say that the crossed systems $(A, H,
\triangleright, f)$ and $(A, H, \triangleright', f')$ are
\emph{cohomologous} and we denote the equivalence classes of
crossed systems modulo such transformations by ${\mathcal H}^2 (H,
A)$.
\end{proposition}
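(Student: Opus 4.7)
The statement is a cohomological classification in the style of Schreier's theorem for group extensions, and the plan is to match each of the three structural requirements on an isomorphism $\psi : A \#^{\triangleright}_f H \to A \#^{\triangleright'}_{f'} H$ (Hopf algebra, left $A$-linear, right $H$-colinear) with one of the four requirements on the $1$-cochain $u : H \to A$ (coalgebra map, unital, lazy, conditions (1)--(2)).

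For the implication $(2) \Rightarrow (1)$, given a coalgebra lazy $1$-cocycle $u$ I would define
$$\psi : A \#^{\triangleright}_{f} H \to A \#^{\triangleright'}_{f'} H, \qquad \psi(a \# h) := a \, u(h_{(1)}) \# h_{(2)}.$$
Since $u$ is a coalgebra map into a Hopf algebra it is convolution invertible (with $u^{-1} = S_A \circ u$), and using \equref{lazy} one checks that $\psi$ is bijective with inverse $a \# h \mapsto a\, u^{-1}(h_{(1)}) \# h_{(2)}$. Left $A$-linearity and right $H$-colinearity are immediate from the formula, while the coalgebra-morphism property reduces to \equref{lazy} together with the fact that $u$ is a coalgebra map. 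Algebra compatibility, which is the bulk of the verification, can be handled by splitting $(a \# h)(c \# g) = (a \# 1_H)(1_A \# h)(c \# 1_H)(1_A \# g)$ and reducing, via the obvious multiplicative behaviour on the $A$-factor, to the two cases $\psi((1_A \# h)(c \# 1_H))$ and $\psi((1_A \# h)(1_A \# g))$; after unpacking \equref{001} these translate precisely into conditions (1) and (2).

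For the converse $(1) \Rightarrow (2)$, given an isomorphism $\psi$ I would set $u(h) := (\text{id}_A \otimes \varepsilon_H)(\psi(1_A \# h))$. Right $H$-colinearity of $\psi$ with respect to the tensor-product comodule structure forces $\psi(1_A \# h) = u(h_{(1)}) \# h_{(2)}$, after which left $A$-linearity extends this to $\psi(a \# h) = a \, u(h_{(1)}) \# h_{(2)}$. The normalization $u(1_H) = 1_A$ is equivalent to $\psi$ being unital, and $u$ being a coalgebra lazy $1$-cocycle falls out of $(\psi \otimes \psi) \Delta = \Delta \psi$ evaluated at $1_A \# h$ and contracted with counits on the appropriate tensor slots; the lazy identity itself emerges by applying $\varepsilon_A \otimes \text{id}_H \otimes \text{id}_A \otimes \varepsilon_H$ to the resulting four-tensor equality. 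Finally, applying $\psi$ to the products $(1_A \# h)(c \# 1_H)$ and $(1_A \# h)(1_A \# g)$ and comparing to $\psi(1_A \# h) \psi(c \# 1_H)$ and $\psi(1_A \# h) \psi(1_A \# g)$ yields, after applying $\text{id}_A \otimes \varepsilon_H$ to eliminate the $H$-factor and multiplying on the left by $u^{-1}(h_{(1)})$, precisely conditions (1) and (2).

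\emph{Main obstacle.} The most delicate step is algebra compatibility, both in verifying it holds given (1), (2) and in extracting (2) from it. In both directions one juggles four- or five-fold coproducts of $h$ and $g$ and must invoke the twisted module condition \equref{-1}, the cocycle condition \equref{cc}, the lazy identity \equref{lazy} and the normalization $u * u^{-1} = \eta \varepsilon$ more or less simultaneously; in particular, one typically inserts $u^{-1}(h_{(i)}) u(h_{(i+1)}) = \varepsilon(h_{(i)}) 1_A$ in just the right place to convert $\triangleright$ into $\triangleright'$ (or conversely) without disturbing the cocycle term. This is the same kind of cohomological computation that underlies Schreier's theorem for group extensions: lengthy but mechanical once the labelling scheme and the order of substitutions are fixed.
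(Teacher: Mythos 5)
Your plan is sound, but it takes a genuinely different route from the paper. The paper's entire proof is a one-line citation: the proposition is obtained by specializing the classification theorem for unified products (\cite[Theorem 3.4]{am1}) to the case where the right action $\triangleleft$ is trivial, exactly as the construction itself was obtained in \seref{1.1}. You instead propose a direct, self-contained verification: the transport map $\psi(a\# h)=a\,u(h_{(1)})\# h_{(2)}$ for $(2)\Rightarrow(1)$, and the extraction $u(h)=(\mathrm{id}_A\ot\varepsilon_H)\psi(1_A\# h)$ for the converse, with colinearity and $A$-linearity pinning down the form of $\psi$ and the counit contractions of $(\psi\ot\psi)\Delta=\Delta\psi$ yielding the coalgebra and lazy conditions \equref{lazy}. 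All the individual claims you make check out: a coalgebra map $u:H\to A$ is indeed convolution invertible with $u^{-1}=S_A\circ u$, and the identity $u(h_{(1)})(h_{(2)}\triangleright' c)=(h_{(1)}\triangleright c)u(h_{(2)})$ coming from condition (1) is precisely what converts one multiplication \equref{001} into the other. What your approach buys is independence from the external reference and transparency about where each hypothesis is used; what it costs is that the algebra-compatibility computation, which you correctly identify as the bulk of the work and only sketch, must actually be carried out with the four- and five-fold coproducts, the twisted module condition and the cocycle condition — the very computation that \cite[Theorem 3.4]{am1} packages once and for all. Either route is acceptable; if you pursue yours, the one step you should be careful to make explicit is that the $u$ extracted in $(1)\Rightarrow(2)$ satisfies $\varepsilon_A\circ u=\varepsilon_H$ and $u(1_H)=1_A$ (from $\psi$ being counital and unital) before you invoke its convolution inverse in conditions (1) and (2).
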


\begin{proof}
It follows from the classification of unified products by
considering $\triangleleft$ to be the trivial action
(\cite[Theorem 3.4]{am1}).
\end{proof}

\begin{remark}
A crossed system of Hopf algebras which is equivalent to the
trivial one is called a \emph{coboundary}. An example of a
coboundary is given in \exref{1.2}, 4).
\end{remark}

The crossed product satisfies the following two universal
properties.

\begin{proposition}
Let $A$ and $H$ be two Hopf algebras and $A
\#^{\triangleright}_{f} \, H$ a crossed product of Hopf algebras.
Then, we have:
\begin{enumerate}
\item[(1)] For any Hopf algebra $X$, any Hopf algebra map $u: A
\rightarrow X$ and any coalgebra map $v: H \rightarrow X$ such
that the following compatibilities hold for all $h, g \in H$, $b
\in A$:
\begin{eqnarray}
\eqlabel{u1} u\bigl(f(h_{(1)},\,g_{(1))}\bigl) v(h_{(2)}g_{(2)})
&{=}& v(h)v(g)\\
\eqlabel{u2} u(h_{(1)} \triangleright b) v(h_{(2)}) &{=}& v(h)u(b)
\end{eqnarray}
there exists a unique Hopf algebra map $w: A
\#^{\triangleright}_{f} \, H \rightarrow X$ such that the
following diagram commutes:
\begin{equation}\eqlabel{d1}
\xymatrix { &{} &{A \#^{\triangleright}_{f} \, H}\ar@{..>}[d]^w & {}\\
& {A}\ar[ru]^{i_{A}}\ar[r]_{u} & {X} &
{H}\ar[lu]_{i_{H}}\ar[l]^{v}}
\end{equation}
\item[(2)] For any Hopf algebra $X$, any Hopf algebra map $v: X
\rightarrow H$ and any coalgebra map $u: X \rightarrow A$ such
that the following compatibilities hold for all $x, y \in X$:
\begin{eqnarray}
\eqlabel{u3} u(x_{(1)}) \otimes v(x_{(2)}) &{=}& u(x_{(2)})
\otimes v(x_{(1)})\\
\eqlabel{u4} u(xy) &{=}& u(x_{(1)})\bigl[v(x_{(2)}) \triangleright
u(y_{(1)})\bigl] f\bigl(v(x_{(3)}),\,v(y_{(2)})\bigl)
\end{eqnarray}
there exists a unique Hopf algebra map $w: X \rightarrow A
\#^{\triangleright}_{f} \, H$ such that the following diagram
commutes:
\begin{equation}\eqlabel{d2}
\xymatrix { &{} &{X}\ar@{..>}[d]^{w}\ar[ld]_{u}\ar[rd]^{v} & {}\\
& {A} & {A \#^{\triangleright}_{f} \,
H}\ar[r]_{\pi_{H}}\ar[l]^{\pi_{A}} & {H} }
\end{equation}
\end{enumerate}
\end{proposition}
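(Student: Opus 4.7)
My approach is to write down, for each universal property, the unique candidate $w$ dictated by the diagram and then verify it is a Hopf algebra morphism by checking coproduct compatibility, multiplicativity, and unit/counit preservation separately; antipode preservation will be automatic since $w$ is a bialgebra map between Hopf algebras.

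For $(1)$, since $a \# h = (a \# 1_H)(1_A \# h) = i_A(a) i_H(h)$ inside $A \#^{\triangleright}_{f} H$, the only candidate consistent with the diagram and multiplicativity is
\[
w(a \# h) := u(a) v(h).
\]
Coalgebra compatibility is immediate from the tensor coalgebra structure on $A \#^{\triangleright}_{f} H$, the fact that $u$ and $v$ are coalgebra maps, and the fact that multiplication in the Hopf algebra $X$ is a coalgebra morphism. Multiplicativity is the main calculation: I would compute $u(a) v(h) u(c) v(g)$ by first pushing $v(h)$ past $u(c)$ using \equref{u2}, which rewrites this as $u(a) u(h_{(1)} \triangleright c) v(h_{(2)}) v(g)$, and then collapsing the consecutive $v$-terms using \equref{u1} to obtain
\[
u(a) u(h_{(1)} \triangleright c) u\bigl(f(h_{(2)}, g_{(1)})\bigr) v(h_{(3)} g_{(2)}),
\]
which is precisely $w$ applied to the crossed-product multiplication \equref{001}. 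Unit preservation reduces to $v(1_H) = 1_X$, which is forced by the diagram together with $w(1) = 1_X$, and uniqueness comes for free from the factorization $a \# h = i_A(a) i_H(h)$.

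For $(2)$, the unique candidate is
\[
w(x) := u(x_{(1)}) \# v(x_{(2)}),
\]
and the diagram collapses via $\pi_A(w(x)) = \varepsilon_H(v(x_{(2)})) u(x_{(1)}) = u(x)$, using $\varepsilon_H \circ v = \varepsilon_X$, and analogously for $\pi_H$. Coproduct compatibility is where \equref{u3} becomes essential: expanding both $\Delta_{A \#^{\triangleright}_{f} H}(w(x))$ and $(w \otimes w)\Delta_X(x)$ in four-fold Sweedler notation gives expressions differing only in the order of the middle $u$- and $v$-factors, and the equality is precisely \equref{u3} applied to the inner Sweedler slot of $\Delta^3_X(x)$. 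For multiplicativity I would expand $w(xy) = u(x_{(1)} y_{(1)}) \# v(x_{(2)}) v(y_{(2)})$ using that $v$ is a Hopf algebra map, rewrite $u(x_{(1)} y_{(1)})$ via \equref{u4}, and compare with $w(x) \cdot w(y)$ expanded through the product formula \equref{001} and the coalgebra map property of $v$; both sides produce the four-fold Sweedler expression
\[
u(x_{(1)}) \bigl[v(x_{(2)}) \triangleright u(y_{(1)})\bigr] f\bigl(v(x_{(3)}), v(y_{(2)})\bigr) \# v(x_{(4)}) v(y_{(3)}).
\]
Uniqueness follows from the identity $(\pi_A \otimes \pi_H) \circ \Delta_{A \#^{\triangleright}_{f} H} = \Id$ on $A \#^{\triangleright}_{f} H$, which forces any coalgebra map $w'$ respecting the diagram to agree with $w$.

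The main obstacle throughout is Sweedler-index bookkeeping: in $(1)$ one must introduce \equref{u1} and \equref{u2} in the right order and track the resulting three-fold split of $h$, while in $(2)$ the coproduct check demands applying \equref{u3} at the correct inner slot of a four-fold expansion, and the multiplicativity check requires matching the Sweedler indices on $v(x)$ coming from \equref{001} against those coming from \equref{u4}. Once these are handled, the remaining verifications (antipode, counit, unit, uniqueness) are formal.
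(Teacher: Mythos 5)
Your proposal is correct and follows essentially the same route as the paper: in part $(1)$ the candidate $w(a\# h)=u(a)v(h)$ is forced by the factorization $a\#h=i_A(a)i_H(h)$ and multiplicativity is checked by combining \equref{u2} and \equref{u1} (you run the computation from the product $w(a\#h)w(c\#g)$ toward $w$ of the crossed product, the paper runs it the other way, which is the same calculation), and in part $(2)$ the candidate $w(x)=u(x_{(1)})\#v(x_{(2)})$ is verified via \equref{u4} exactly as in the paper. Your added remarks on where \equref{u3} enters the coalgebra-map check and on uniqueness via $(\pi_A\ot\pi_H)\circ\Delta$ only flesh out steps the paper leaves to the reader.
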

\begin{proof}
$(1)$ Suppose first that such a map $w$ exists. Then, we have:
\begin{eqnarray*}
w (a \# g)  &{=}& w ((a \# 1)(1 \# g))= w (a \# 1)
w(1 \# g)\\
&{=}& (w \circ i_{A}) (a) (w \circ i_{H})(h)
\stackrel{\equref{d1}} = u (a) v(g)
\end{eqnarray*}
for all $a \# h \in A \#^{\triangleright}_{f} \, H$ and this
proves the uniqueness of $w$. Next, we prove that $w$ given by $w
(h \# g) = u (a) v(g)$ is a Hopf algebra map such that diagram
\equref{d1} commutes.
\begin{eqnarray*}
w\bigl((a \# h)(b \# g)\bigl) &{=}& w\bigl(a(h_{(1)}
\triangleright b)f(h_{(2)},\,g_{(1)}) \# h_{(3)}g_{(2)}\bigl)\\
&{=}& u(a)u(h_{(1)} \triangleright b)
\underline{u\bigl(f(h_{(2)},\,g_{(1)})\bigl)v(h_{(3)}g_{(2)})}\\
&\stackrel{\equref{u1}}{=}& u(a)\underline{u(h_{(1)}
\triangleright b)v(h_{(2)})}v(g)\\
&\stackrel{\equref{u2}}{=}& u(a)v(h)u(b)v(g) = w(a \# h)w(b \# g)
\end{eqnarray*}
Thus $w$ is an algebra map. The rest of  the proof is
straightforward and is left to the reader.\\
$(2)$ Define $w: X \rightarrow A \#^{\triangleright}_{f} \, H$ by
$w(x) = u(x_{(1)}) \otimes v(x_{(2)})$. The uniqueness of $w$ as
well as the relations $\pi_{H} \circ w = v$ and $\pi_{A} \circ w =
u$ are easy to establish. In what follows we prove that $w$ is an
algebra map.
\begin{eqnarray*}
w(x)w(y) &{=}& \bigl(u(x_{(1)}) \otimes
v(x_{(2)})\bigl)\bigl(u(y_{(1)}) \otimes v(y_{(2)})\bigl)\\
&{=}& u(x_{(1)})\bigl(v(x_{(2)})_{(1)} \triangleright
u(y_{(1)})\bigl) f\bigl(v(x_{(2)})_{(2)},\,v(y_{(2)})_{(1)}\bigl)
\# v(x_{(2)})_{(3)}v(y_{(2)})_{(2)}\\
&{=}& u(x_{(1)})\bigl(v(x_{(2)}) \triangleright u(y_{(1)})\bigl)
f\bigl(v(x_{(3)}),\,v(y_{(2)})\bigl) \# v(x_{(4)})v(y_{(3)})\\
&{=}& \underline{u(x_{(1)(1)})\bigl(v(x_{(1)(2)}) \triangleright
u(y_{(1)(1)})\bigl) f\bigl(v(x_{(1)(3)}),\,v(y_{(1)(2)})\bigl)} \#
v(x_{(2)})v(y_{(2)})\\
&\stackrel{\equref{u4}}{=}& u(x_{(1)}y_{(1)}) \#
v(x_{(2)})v(y_{(2)})\\
&{=}& u(x_{(1)}y_{(1)}) \# v(x_{(2)}y_{(2)}) = w(xy)
\end{eqnarray*}
\end{proof}

\section{Commutativity, semisimplicity, integrals for the crossed product}

In this section we study further properties of the crossed product
of Hopf algebras. In particular, we are interested in which of the
properties of the Hopf algebras $A$ and $H$ are preserved by the
crossed product $A \#_{f}^{\triangleright} \, H$. First, we should
note that given the coalgebra structure of the crossed product
(i.e. the tensor product coalgebra) it is straightforward to see
that $A \#_{f}^{\triangleright} \, H$ is cocommutative
(cosemisimple) if and only if both $A$ and $H$ are cocommutative
(cosemisimple).

Our next proposition extends the results obtained in \cite{os} on
the commutativity of crossed products of groups acting on rings.
The cocycle $f: H\ot H \to A$ is called \textit{symmetric} if
$f(g,h) = f(h,g)$ for all $g, h \in H$.

\begin{proposition}\prlabel{3.1}
Let $A \#_{f}^{\triangleright} \, H$ be a crossed product of Hopf
algebras. Then $A \#_{f}^{\triangleright} \, H$ is commutative if
and only if $A$ and $H$ are commutative, $\triangleright$ is
trivial and $f$ is symmetric.
\end{proposition}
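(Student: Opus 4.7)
The plan is to prove both implications, with the ``only if'' direction amounting to extracting each of the four conditions by feeding carefully chosen elements into the commutativity hypothesis and then applying the counits of $A$ or $H$ to one of the two tensor factors.

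For the ``if'' direction I would simply substitute the three hypotheses into the multiplication formula \equref{001}. Because $\triangleright$ is trivial, $h_{(1)}\triangleright c=\varepsilon_H(h_{(1)})c$, which collapses Sweedler summations and gives
$$
(a\#h)(c\#g)=ac\,f(h_{(1)},g_{(1)})\#h_{(2)}g_{(2)}.
$$
The symmetry of $f$, commutativity of $A$, and commutativity of $H$ then immediately yield $(a\#h)(c\#g)=(c\#g)(a\#h)$.

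For the ``only if'' direction, assume $A\#^{\triangleright}_{f}H$ is commutative. I would extract the four conclusions one at a time:
\emph{(i) $A$ is commutative:} since $i_A$ is an algebra map, $(a\#1_H)(b\#1_H)=ab\#1_H$, and commutativity in the crossed product forces $ab=ba$.
\emph{(ii) $H$ is commutative:} compute $(1_A\#h)(1_A\#g)=f(h_{(1)},g_{(1)})\#h_{(2)}g_{(2)}$ using \equref{0} and \equref{3}, then apply $\varepsilon_A\otimes\mathrm{Id}_H$; because $f$ is a coalgebra map, $\varepsilon_A\circ f=\varepsilon_H\otimes\varepsilon_H$, so the result collapses to $hg$, and comparing with the swapped product gives $hg=gh$.
\emph{(iii) $\triangleright$ is trivial:} a direct calculation using \equref{3} shows
$$
(1_A\#h)(a\#1_H)=(h_{(1)}\triangleright a)\#h_{(2)},\qquad (a\#1_H)(1_A\#h)=a\#h.
$$
Equating these and applying $\mathrm{Id}_A\otimes\varepsilon_H$ yields $h\triangleright a=\varepsilon_H(h)a$.
\emph{(iv) $f$ is symmetric:} with $\triangleright$ already trivial, step (ii)'s computation gives $(1_A\#h)(1_A\#g)=f(h_{(1)},g_{(1)})\#h_{(2)}g_{(2)}$. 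Commutativity together with commutativity of $H$ forces
$$
f(h_{(1)},g_{(1)})\otimes h_{(2)}g_{(2)}=f(g_{(1)},h_{(1)})\otimes h_{(2)}g_{(2)}.
$$
Applying $\mathrm{Id}_A\otimes\varepsilon_H$ and using the counit property $h_{(1)}\varepsilon_H(h_{(2)})=h$ on each variable collapses both sides to $f(h,g)$ and $f(g,h)$ respectively.

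There is no real obstacle here; the only thing to be careful about is keeping the Sweedler indices straight and invoking the coalgebra-map property of $f$ (and the unit/counit normalizations \equref{0} and \equref{3}) at the right moments so that the projections by $\varepsilon_A$ and $\varepsilon_H$ isolate exactly the four desired conclusions. The four extractions above are independent (apart from the mild fact that step (iv) is cleanest after (iii) is known), so the proof is essentially a short, organized sequence of substitutions.
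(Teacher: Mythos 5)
Your proposal is correct and follows essentially the same route as the paper: the converse is the same direct substitution into \equref{001}, and the forward direction extracts each condition by specializing the commutativity identity (to $a=c=1$, to $c=1,h=1$, etc.) and applying $\varepsilon_A\otimes\mathrm{Id}$ or $\mathrm{Id}\otimes\varepsilon_H$, with $A$ commutative because it embeds as a subalgebra. The only cosmetic difference is that your step (iv) invokes commutativity of $H$ to match the second tensor factors, which is not actually needed since applying $\mathrm{Id}\otimes\varepsilon_H$ kills that factor regardless.
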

\begin{proof}
Suppose first that $A \#_{f}^{\triangleright} \, H$ is
commutative. Then, we have:
\begin{equation}\eqlabel{com}
a(h_{(1)} \triangleright c)f\bigl(h_{(2)},g_{(1)}\bigl) \#
h_{(3)}g_{(2)} = c(g_{(1)} \triangleright a) f\bigl(g_{(2)},
h_{(1)}\bigl) \# g_{(3)}h_{(2)}
\end{equation}
for all $a, c \in A$ and $h, g \in H$.\\
By taking $a=c=1$ and then applying $\varepsilon \otimes Id$ in
\equref{com} we get $hg=gh$ for all $h, g \in H$ that is, $H$ is
commutative. Moreover, if we apply $Id \otimes \varepsilon$ to the
same equation \equref{com} with $a=c=1$ we get that $f(h, \,
g) = f(g, \ h)$ for all $h, g \in H$ that is, $f$ is symmetric.\\
Next, if we take $c=1$ and $h=1$ in \equref{com} and then we apply
$Id \otimes \varepsilon$ we obtain $g \triangleright a =
\varepsilon(g)a$ for all $a \in A$, $g \in H$, that is,
$\triangleright$ is trivial. Finally, $A$ is commutative as a
subalgebra in the commutative Hopf algebra $A
\#_{f}^{\triangleright} \, H$.

Assume now that $A$ and $H$ are commutative Hopf algebras,
$\triangleright$ is trivial and $f$ is symmetric. We then have:
\begin{eqnarray*}
(a \# h)(c \# g) &{=}& a c f\bigl(h_{(1)}\, \, g_{(1)}\bigl) \#
h_{(2)}g_{(2)}\\
&{=}& c a f\bigl(g_{(1)}, \, h_{(1)}\bigl) \# g_{(2)}h_{(2)}\\
&{=}& (c \# g)(a \# h)
\end{eqnarray*}
for all $a \# h$, $c \# g \in A \#_{f}^{\triangleright} \, H$.
Thus $A \#_{f}^{\triangleright} \, H$ is commutative.
\end{proof}

\begin{proposition}\prlabel{inv}
Let $A \#_{f}^{\triangleright} \, H$ be a crossed product of Hopf
algebras.
\begin{enumerate}
\item[(1)] If $A \#_{f}^{\triangleright} \, H$ is
involutory then both $A$ and $H$ are involutory Hopf algebras.\\
\item[(2)] Suppose that $H$ is cocommutative. Then $A
\#_{f}^{\triangleright} \, H$ is involutory if and only if $A$ is
involutory and $g_{(1)} \triangleright
f\bigl(S_{H}(g_{(2)}),\,g_{(3)}\bigl) =
f\bigl(g_{(1)},\,S_{H}(g_{(2)})\bigl)$ for all $g \in H$.
\end{enumerate}
\end{proposition}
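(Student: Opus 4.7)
The plan is to dispatch (1) via functoriality of antipodes, and to prove (2) by reformulating the involutivity of $E := A\#_f^{\triangleright}H$ as a convolution identity in $\mathrm{Hom}(H, A)$. For (1), both $i_A : A\to E$ and $\pi_H : E\to H$ are Hopf algebra maps, so they intertwine the antipodes. Iterating $i_A\circ S_A = S_E\circ i_A$ and $\pi_H\circ S_E = S_H\circ\pi_H$, and using $S_E^2 = \mathrm{Id}$, one obtains $i_A\circ S_A^2 = i_A$ and $S_H^2\circ\pi_H = \pi_H$; injectivity of $i_A$ and surjectivity of $\pi_H$ then force $S_A^2 = \mathrm{Id}_A$ and $S_H^2 = \mathrm{Id}_H$.

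For (2), I would first decouple the $A$- and $H$-factors. From $(a\# g) = (a\# 1)(1\# g)$ and antimultiplicativity of $S$, we have $S^2(a\# g) = S^2(a\# 1)\cdot S^2(1\# g)$; since $i_A$ is a Hopf algebra map, $S^2(a\# 1) = S_A^2(a)\# 1$, so $S^2$ fixes the image of $i_A$ precisely when $A$ is involutory. Cocommutativity of $H$ gives $S_H^2 = \mathrm{Id}_H$ automatically (the classical theorem that cocommutative Hopf algebras over a field are involutory). The whole issue thus comes down to deciding when $S^2(1\# g) = 1\# g$ for every $g\in H$, under the standing assumption that $A$ is involutory.

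The core of the proof is to compute $S^2(1\# g)$ by applying \equref{antipod} twice. The first application yields $S(1\# g) = S_A\bigl[f(S_H(g_{(2)}), g_{(3)})\bigr]\# S_H(g_{(1)})$; the second, exploiting $S_A^2 = S_H^2 = \mathrm{Id}$ and the identity $\Delta\circ S_H = (S_H\otimes S_H)\circ\Delta$ that holds because $H$ is cocommutative, produces
$$S^2(1\# g) = \sum S_A\bigl[f(g_{(3)}, S_H(g_{(4)}))\bigr]\cdot\bigl(g_{(1)}\triangleright f(S_H(g_{(5)}), g_{(6)})\bigr)\# g_{(2)}.$$
The decisive observation is that the $k$-linear maps
$$\phi(h) := f(h_{(1)}, S_H(h_{(2)})),\qquad \chi(h) := h_{(1)}\triangleright f(S_H(h_{(2)}), h_{(3)})$$
from $H$ to $A$ are both coalgebra morphisms (this uses cocommutativity of $H$ together with the fact that $f$ and $\triangleright$ are coalgebra maps), hence convolution-invertible in $\mathrm{Hom}(H, A)$ with $\phi^{-1} = S_A\circ\phi$. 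Permuting Sweedler factors of $g$ freely via cocommutativity, the above expression reorganizes into $S^2(1\# g) = \sum(\phi^{-1}\ast\chi)(g_{(1)})\# g_{(2)}$.

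Consequently $S^2(1\# g) = 1\# g$ for every $g\in H$ iff $\phi^{-1}\ast\chi = \varepsilon_H(\,\cdot\,)\,1_A$ in $\mathrm{Hom}(H, A)$, iff $\chi = \phi$, which is exactly the condition stated in (2). The main obstacle is the careful Sweedler-index bookkeeping required to pass from the raw double-antipode formula to the convolution form $\sum(\phi^{-1}\ast\chi)(g_{(1)})\# g_{(2)}$; once $\phi$ and $\chi$ are recognised as coalgebra maps, the convolution-theoretic reformulation and the equivalence with the stated condition follow naturally.
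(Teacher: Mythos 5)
Your proposal is correct, and for the substantive part (2) it follows essentially the same route as the paper: apply the antipode formula \equref{antipod} twice to $1\# g$, use cocommutativity of $H$ (which gives $S_H^2=\Id$ and lets you permute Sweedler indices and apply \equref{co1}), and reduce $S^2(1\# g)=1\# g$ to the identity $g_{(1)}\triangleright f(S_H(g_{(2)}),g_{(3)})=f(g_{(1)},S_H(g_{(2)}))$. Your formula for $S^2(1\# g)$ agrees with the paper's \equref{H} up to a cocommutative permutation of indices, and your convolution-algebra packaging --- recognizing $\phi(h)=f(h_{(1)},S_H(h_{(2)}))$ and $\chi(h)=h_{(1)}\triangleright f(S_H(h_{(2)}),h_{(3)})$ as coalgebra maps with $\phi^{-1}=S_A\circ\phi$, so that $S^2(1\# g)=(\phi^{-1}\ast\chi)(g_{(1)})\# g_{(2)}$ --- is exactly what the paper does implicitly when it says ``after inverting $f$''; making this explicit is a genuine clarification of the paper's tersest step, and it handles both directions of the equivalence at once. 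The one place you genuinely diverge is part (1): the paper proves $S_H^2=\Id$ by the explicit computation yielding \equref{H} and then applying $\varepsilon\otimes\Id$ (a computation it reuses in part (2)), whereas you deduce it functorially from the fact that $\pi_H$ is a surjective Hopf algebra map, hence intertwines antipodes, so $S_H^2\circ\pi_H=\pi_H\circ S_E^2=\pi_H$; this is cleaner as a standalone proof of (1), though you still need the explicit double-antipode computation for (2), so no work is actually saved. Both arguments rely on $A$ being involutory before simplifying the inner $S_A^2$ term in the second application of \equref{antipod}; you correctly have this available in each direction (from part (1) in the forward implication, by hypothesis in the converse).
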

\begin{proof}
$(1)$ Since $A \#_{f}^{\triangleright} \, H$ is involutory and $A$
is a Hopf subalgebra of $A \#_{f}^{\triangleright} \, H$ it
follows that $A$ is indeed involutory. Furthermore, we also have:
\begin{eqnarray*}
&&1 \# g = S^{2}(1 \# g) \stackrel{\equref{antipod}}=
S\Bigl(S_{A}\bigl(f(S_{H}(g_{(2)}),\,g_{(3)})\bigl) \#
S_{H}(g_{(1)})\Bigl)\\
&\stackrel{\equref{antipod}}{=}&
\Bigl[S_{A}\Bigl(f\bigl(S_{H}^{2}(g_{(2)}),\,S_{H}(g_{(1)})\bigl)\Bigl)
\# S_{H}^{2}(g_{(3)})\Bigl]
\Bigl[\underline{S_{A}^{2}\bigl(f(S_{H}^{2}(g_{(4)}),\,g_{(5)})\bigl)}
\# 1\Bigl]\\
&{=}&
\Bigl[S_{A}\Bigl(f\bigl(S_{H}^{2}(g_{(2)}),\,S_{H}(g_{(1)})\bigl)\Bigl)
\# S_{H}^{2}(g_{(3)})\Bigl]
\Bigl(f\bigl(S_{H}^{2}(g_{(4)}),\,g_{(5)}\bigl)
\# 1\Bigl)\\
&{=}&
S_{A}\Bigl(f\bigl(S_{H}^{2}(g_{(2)}),\,S_{H}(g_{(1)})\bigl)\Bigl)\Bigl(S_{H}^{2}(g_{(3)})
\triangleright f\bigl(S_{H}(g_{(5)}),\,g_{(6)}\bigl)\Bigl) \#
S_{H}^{2}(g_{(4)})
\end{eqnarray*}
Hence, we obtained:
\begin{equation}\eqlabel{H}
1 \# g =
S_{A}\Bigl(f\bigl(S_{H}^{2}(g_{(2)}),\,S_{H}(g_{(1)})\bigl)\Bigl)\Bigl(S_{H}^{2}(g_{(3)})
\triangleright f\bigl(S_{H}(g_{(5)}),\,g_{(6)}\bigl)\Bigl) \#
S_{H}^{2}(g_{(4)})
\end{equation}
If we apply $\varepsilon \otimes Id$ in \equref{H} we get
$S_{H}^{2}(g) = g$ for all $g \in H$.\\
$(2)$ Suppose first that $A \#_{f}^{\triangleright} \, H$ is
involutory. It follows from $(1)$ that $A$ and $H$ are involutory.
Remark however that $H$ is always involutory by the assumption of
being cocommutative. Since $A \#_{f}^{\triangleright} \, H$ is
involutory it follows again by $(1)$ that relation \equref{H}
holds for all $h \in H$. By applying $Id \otimes \varepsilon$ in
\equref{H} and having in mind that $S_{H}^{2}(g) = g$ for all $g
\in H$, we get:
\begin{equation}\eqlabel{19}
S_{A}\Bigl(f\bigl(g_{(2)},\,S_{H}(g_{(1)})\bigl)\Bigl)\Bigl(g_{(3)}
\triangleright f\bigl(S_{H}(g_{(4)}),\,g_{(5)}\bigl)\Bigl) =
\varepsilon(g)
\end{equation}
As $H$ is cocommutative, \equref{19} is equivalent to:
\begin{equation}\eqlabel{20}
S_{A}\Bigl(f\bigl(g_{(4)},\,S_{H}(g_{(1)})\bigl)\Bigl)\Bigl(g_{(2)}
\triangleright f\bigl(S_{H}(g_{(3)}),\,g_{(5)}\bigl)\Bigl) =
\varepsilon(g)
\end{equation}
Since $f$ is a coalgebra map it follows, after inverting $f$ in
\equref{20}, that we have: $$g_{(1)} \triangleright
f\bigl(S_{H}(g_{(2)}),\,g_{(3)}\bigl) =
f\bigl(g_{(1)},\,S_{H}(g_{(2)})\bigl)$$ for all $g \in H$.\\
Assume now that $A$ is involutory and
\begin{equation}\eqlabel{inv}
g_{(1)} \triangleright f\bigl(S_{H}(g_{(2)}),\,g_{(3)}\bigl) =
f\bigl(g_{(1)},\,S_{H}(g_{(2)})\bigl)
\end{equation}
holds for all $g \in H$. As mentioned before, $H$ is also
involutory from the cocommutativity assumption. Since $T = \{a \#
1_{H} ~|~ a \in A\} \cup \{1_{A} \# h ~|~ h \in H\}$ generates $A
\#_{f}^{\triangleright} \, H$ as an algebra and $S^{2}(a \# h) =
S^{2}(a \# 1)S^{2}(1 \# h)$, we only need to prove that $S^{2}(a
\# 1) = a \# 1$ and $S^{2}(1 \# h) = 1 \# h$ for all $a \in A$, $h
\in H$. Since $ S^{2}(a \# 1) = S^{2}_{A}(a) \# 1$ it follows
trivially that $S^{2}(a \# 1) = a \# 1$ by the involutivity of
$A$. As we already noticed in $(1)$, we have:
\begin{eqnarray*}
S^{2}(1 \# g) &{=}&
S_{A}\Bigl(f\bigl(g_{(2)},\,S_{H}(g_{(1)})\bigl)\Bigl)\underline{\Bigl(g_{(3)}
\triangleright f\bigl(S_{H}(g_{(5)}),\,g_{(6)}\bigl)\Bigl) \#
g_{(4)}}\\
&\stackrel{\equref{co1}}{=}&
S_{A}\Bigl(f\bigl(g_{(2)},\,S_{H}(g_{(1)})\bigl)\Bigl)\underline{\Bigl(g_{(4)}
\triangleright f\bigl(S_{H}(g_{(5)}),\,g_{(6)}\bigl)\Bigl)} \#
g_{(3)}\\
&\stackrel{\equref{inv}}{=}&S_{A}\Bigl(f\bigl(g_{(2)},\,S_{H}(g_{(1)})\bigl)\Bigl)f\bigl(g_{(4)},\,S_{H}(g_{(5)})\bigl)
\# g_{(3)}\\
&{=}&S_{A}\Bigl(f\bigl(g_{(3)},\,S_{H}(g_{(2)})\bigl)\Bigl)f\bigl(g_{(4)},\,S_{H}(g_{(1)})\bigl)
\# g_{(5)}\\
&{=}& 1 \# g
\end{eqnarray*}
for all $g \in H$. Thus $A \#_{f}^{\triangleright} \, H$ is
involutory and the proof is finished.
\end{proof}
\begin{remark}
A more general result concerning the involutivity of the crossed
product $A \#_{f}^{\triangleright} \, H$ can be obtained by
dropping the cocommutativity assumption on $H$. However the result
is less transparent.
\end{remark}

Our next result indicates a way of constructing integrals for the
crossed products and proves that the crossed product of two Hopf
algebras is semisimple if and only if both Hopf algebras are
semisimple. This result is true regardless of the characteristic
of the field $k$ (in characteristic $0$ the result can be easily
derived from the well-known result in \cite{LR}).

\begin{proposition}\prlabel{3.3}
Let $A \#_{f}^{\triangleright} \, H$ be a crossed product of
finite dimensional Hopf algebras.
\begin{enumerate}
\item[(1)] If $x_{A}$ is a right integral in A and $x_{H}$ a right
integral in $H$ then $x_{A} \# x_{H}$ is a right
integral in $A \#_{f}^{\triangleright} \, H$;\\
\item[(2)] If $\Sigma t^{1} \otimes t^{2}$ is a right integral in
$A \#_{f}^{\triangleright} \, H$ then $z_{H} := \Sigma
\varepsilon(t^{1})t^{2}$ is a right integral in $H$. If $\Sigma
t^{1} \otimes t^{2}$ is a left integral in $A
\#_{f}^{\triangleright} \, H$ then $z_{A} := \Sigma
t^{1}\varepsilon(t^{2})$ is a left integral in $A$;\\
\item[(3)] $A \#_{f}^{\triangleright} \, H$ is semisimple if and
only if $A$ and $H$ are both semisimple.
\end{enumerate}
\end{proposition}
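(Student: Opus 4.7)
The proof will be a direct computation relying only on the multiplication formula \equref{001}, the fundamental identity for integrals, and the coalgebra nature of $\triangleright$ and $f$. None of the deeper compatibilities \equref{-1}--\equref{co3} should be needed.

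For part $(1)$, I would apply the right integral $x_A$ to the expression coming from \equref{001}:
$$
(x_A \# x_H)(c \# g) = x_A (x_{H(1)} \triangleright c) f(x_{H(2)}, g_{(1)}) \# x_{H(3)} g_{(2)}.
$$
Since $x_A$ is a right integral, $x_A y = \varepsilon_A(y) x_A$ for all $y \in A$, so the $A$-factor collapses to a scalar multiple of $x_A$. Using that $\triangleright$ and $f$ are coalgebra maps we have $\varepsilon_A(x_{H(1)} \triangleright c) = \varepsilon_H(x_{H(1)})\varepsilon_A(c)$ and $\varepsilon_A(f(x_{H(2)}, g_{(1)})) = \varepsilon_H(x_{H(2)})\varepsilon_H(g_{(1)})$. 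Absorbing the $\varepsilon_H(x_{H(i)})$ terms via the counit, the right hand side becomes $\varepsilon_A(c)\, x_A \# x_H g$, and a final application of the right integral property of $x_H$ yields $\varepsilon_A(c)\varepsilon_H(g)\, x_A \# x_H$.

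For part $(2)$, I would plug specific test elements into the defining identity of an integral. For the right integral statement, I take $c = 1_A$ in $(\sum t^1 \# t^2)(c \# g) = \varepsilon_A(c)\varepsilon_H(g) \sum t^1 \# t^2$, use \equref{0} to get
$$
\sum t^1 f(t^2_{(1)}, g_{(1)}) \# t^2_{(2)} g_{(2)} = \varepsilon_H(g) \sum t^1 \# t^2,
$$
and then apply $\varepsilon_A \otimes \Id$. Since $f$ is a coalgebra map, $\varepsilon_A(f(t^2_{(1)}, g_{(1)})) = \varepsilon_H(t^2_{(1)})\varepsilon_H(g_{(1)})$, so the left hand side reduces to $z_H g$ while the right hand side is $\varepsilon_H(g) z_H$. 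The left integral claim is completely symmetric: set $g = 1_H$, use \equref{1} and \equref{3} to collapse $(c \# 1)(\sum t^1 \# t^2) = \sum c t^1 \# t^2$, then apply $\Id \otimes \varepsilon_H$ to obtain $c z_A = \varepsilon_A(c) z_A$.

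Part $(3)$ is then an immediate consequence of Maschke's theorem combined with $(1)$ and $(2)$. If $A$ and $H$ are semisimple, choose right integrals $x_A \in A$, $x_H \in H$ with $\varepsilon_A(x_A)\varepsilon_H(x_H) \neq 0$; by $(1)$, $x_A \# x_H$ is a right integral in $A \#_{f}^{\triangleright} \, H$ of counit $\varepsilon_A(x_A)\varepsilon_H(x_H) \neq 0$, so the crossed product is semisimple. Conversely, if $A \#_{f}^{\triangleright} \, H$ is semisimple, pick a right integral $\sum t^1 \otimes t^2$ with nonzero counit; by $(2)$, $z_H$ is a right integral in $H$ with $\varepsilon_H(z_H) = \sum \varepsilon_A(t^1)\varepsilon_H(t^2) \neq 0$, showing $H$ is semisimple, and the semisimplicity of $A$ is obtained analogously by starting from a left integral with nonzero counit. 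The only real obstacle is bookkeeping the many Sweedler indices in $(1)$ and checking that all the $\varepsilon_H(x_{H(i)})$ factors produced by the coalgebra maps collapse cleanly to recover $x_H g$; once this is done, everything else is automatic.
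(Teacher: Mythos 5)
Your proposal is correct and follows essentially the same route as the paper: part (1) by direct computation with the multiplication formula \equref{001}, the right-integral property of $x_A$ and $x_H$, and the coalgebra nature of $\triangleright$ and $f$; part (2) by specializing the integral identity to $1_A \# g$ (resp. $c \# 1_H$) and applying $\varepsilon_A \otimes \Id$ (resp. $\Id \otimes \varepsilon_H$); and part (3) via Maschke's theorem. The only cosmetic difference is in the converse direction of (3): the paper obtains semisimplicity of $A$ directly from the fact that $A \cong A \# 1_H$ is a Hopf subalgebra of the semisimple crossed product, whereas you invoke the left-integral half of part (2) --- both arguments work.
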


\begin{proof}
$(1)$ Let $a \# h \in A \#_{f}^{\triangleright} \, H$. We then
have:
\begin{eqnarray*}
(x_{A} \# x_{H})(a \# h) &{=}& \underline{x_{A}
\bigl((x_{H})_{(1)} \triangleright a \bigl)} f\bigl((x_{H})_{(2)},
\, h_{(1)}\bigl)\#
(x_{H})_{(3)}h_{(2)}\\
&{=}& x_{A} \varepsilon \bigl((x_{H})_{(1)} \triangleright a
\bigl)f\bigl((x_{H})_{(2)}, \, h_{(1)}\bigl)\#
(x_{H})_{(3)}h_{(2)}\\
&{=}& \varepsilon(a) \underline{x_{A} f\bigl((x_{H})_{(1)}, \,
h_{(1)}\bigl)}\# (x_{H})_{(2)}h_{(2)}\\
&{=}& \varepsilon(a) x_{A} \varepsilon \Bigl
(f\bigl((x_{H})_{(1)}, \,
h_{(1)}\bigl)\Bigl)\# (x_{H})_{(2)}h_{(2)}\\
&{=}& \varepsilon(a) x_{A} \# \underline{x_{H}h}\\
&{=}& \varepsilon(a) \varepsilon(h) x_{A} \# x_{H}\\
&{=}& \varepsilon(a \# h) x_{A} \# x_{H}
\end{eqnarray*}
where we used the fact that $\triangleright$ and $f$ are coalgebra
maps and $x_{A}$, $x_{H}$ are right integrals in $A$ respectively
$H$.\\
$(2)$ As $\Sigma t^{1} \# t^{2}$ is a right integral in $A
\#_{f}^{\triangleright} \, H$ we have:
\begin{equation}\eqlabel{int}
\Sigma t^{1}(t^{2}_{(1)} \triangleright a)f(t^{2}_{(2)}, \,
h_{(1)}) \# t^{2}_{(3)}h_{(2)} = \Sigma \varepsilon(a)
\varepsilon(h) t^{1} \# t^{2}
\end{equation}
for all $a \# h \in A \#_{f}^{\triangleright} \, H$. By applying
$\varepsilon \otimes Id$ in \equref{int} and considering $a \# h
:= 1_{A} \# g$ we get, using the fact that $f$ is a coalgebra map,
that: $\Sigma \varepsilon(t^{1})t^{2}g = \Sigma \varepsilon(g)
\varepsilon(t^{1})t^{2}$ for every $g \in H$. But the last
equality is equivalent to: $z_{H}g = \varepsilon(g)z_{H}$. Hence
$z_{H}$ is a right integral in $H$. In the same manner it can be
proved that if $\Sigma t^{1} \otimes t^{2}$ is a left integral in
$A \#_{f}^{\triangleright} \, H$ then $z_{A} := \Sigma
t^{1}\varepsilon(t^{2})$ is a left integral in $A$.

$(3)$ Assume first that both $A$ and $H$ are semisimple algebras.
It follows from Maschke's theorem that there exist right integrals
$x_{A} \in A$ and $x_{H} \in H$ such that $\varepsilon(x_{A}) =
\varepsilon (x_{H}) = 1$. From $(1)$ we have that $x_{A} \# x_{H}$
is a right integral in $A \#_{f}^{\triangleright} \, H$. Moreover,
we have $\varepsilon(x_{A} \# x_{H}) =
\varepsilon(x_{A})\varepsilon (x_{H}) = 1$. Thus $A
\#_{f}^{\triangleright} \, H$ is semisimple.

Suppose now that $A \#_{f}^{\triangleright} \, H$ is semisimple.
Since $A$ is a Hopf subalgebra in $A \#_{f}^{\triangleright} \, H$
then $A$ is semisimple. Let $\Sigma t^{1} \otimes t^{2}$ be a
right integral in $A \#_{f}^{\triangleright} \, H$. Hence, by
$(2)$, $z_{H} := \Sigma \varepsilon(t^{1})t^{2} \in H$ is a right
integral in $H$. Since $\varepsilon(\Sigma t^{1} \# t^{2}) = 1$ it
follows that we also have $\varepsilon(z_{H}) = 1$. Hence $A$ and
$H$ are both semisimple algebras.
\end{proof}

\section{Braided structures on the crossed product of Hopf algebras}

In this section we describe the coquasitriangular or braided
structures on the crossed product of Hopf algebras. In other
words, we determine all braided structures that can be defined on
the monoidal category of $A \#_{f}^{\triangleright} \, H$ -
comodules. The notion of coquasitriangular bialgebra (Hopf
algebra) appeared for the first time in \cite{Mj} and it was
formulated and studied by Larson and Towber in \cite{LT}.

First we recall from \cite{DT2} that if $A$ and $H$ are two Hopf
algebras and $\alpha: A \otimes H \rightarrow k$ is a $k$-linear
map which fulfills the compatibilities:
\begin{enumerate}
\item[(BR1)] $p(xy , z) = p(x , z_{(1)}) p(y ,
z_{(2)})$\\
\item[(BR2)] $p(1 , x) = \varepsilon(x)$\\
\item[(BR3)] $p(x , yz) = p(x_{(1)} , z) p(x_{(2)}
, y)$\\
\item[(BR4)] $p(y , 1) = \varepsilon(y)$\\
\end{enumerate}
for all $x$, $y \in A$, $z \in H$, then $\alpha$ is called
\textit{skew pairing} on $(A, H)$

Moreover, recall from \cite{LT} that a Hopf algebra $H$ is called
\textit{braided} or \textit{coquasitriangular} if there exists a
linear map $p: H \otimes H \rightarrow k$ such that relations
$(BR1)-(BR4)$ are fulfilled and
\begin{enumerate}
\item[(BR5)] $p(x_{(1)} , y_{(1)})x_{(2)}y_{(2)} = y_{(1)}x_{(1)}
p(x_{(2)} , y_{(2)})$ \end{enumerate} holds for all $x$, $y$, $z
\in H$.

\begin{definition}
Let $A$, $H$ be two Hopf algebras, $f: H \otimes H \rightarrow A$
a coalgebra map and $p: A\otimes A \rightarrow k$ a braiding on
$A$. A linear map $u : A \otimes H \rightarrow k$ is called
\textit{(p,f) - right skew pairing on $(A, H)$} if the following
compatibilities are fulfilled for any $a$, $b \in A$, $g$, $t \in
H$:
\begin{enumerate}
\item[(RS1)] $u(ab , t) = u(a , t_{(1)}) u(b , t_{(2)})$\\
\item[(RS2)] $u(1 , h) = \varepsilon(h)$\\
\item[(RS3)] $u(a_{(1)} , g_{(2)}t_{(2)}) p\bigl(a_{(2)} ,
f(g_{(1)} , t_{(1)})\bigl) = u(a_{(1)} , t) u(a_{(2)} , g)$\\
\item[(RS4)] $u(a , 1) = \varepsilon(a)$
\end{enumerate}
\end{definition}

\begin{definition}
Let $A$, $H$ be two Hopf algebras, $f: H \otimes H \rightarrow A$
a coalgebra map and $p: A\otimes A \rightarrow k$ a braiding on
$A$. A linear map $v: H \otimes A \rightarrow k$ is called
\textit{(p,f) - left skew pairing on $(H, A)$} if the following
compatibilities are fulfilled for any $b$, $c \in A$, $h$, $g \in
H$:
\begin{enumerate}
\item[(LS1)] $p\bigl(f(h_{(1)} , g_{(1)}) , c_{(1)}\bigl)
v(h_{(2)}g_{(2)} , c_{(2)}) = v(h , c_{(1)}) v(g , c_{(2)})$\\
\item[(LS2)] $v(1 , a) = \varepsilon(a)$\\
\item[(LS3)] $v(h , bc) = v(h_{(1)} , c) v(h_{(2)} , b)$\\
\item[(LS4)] $v(h , 1) = \varepsilon(h)$
\end{enumerate}
\end{definition}

\begin{examples}
$1)$ If $f = \varepsilon_{H} \otimes \varepsilon_{H}$ then the
notions of (p,f) - right skew pairing and (p,f) - left skew
pairing coincide with the notion of skew pairing on $(A, H)$
respectively $(H, A)$.\\
$2)$ Let $L = < t ~|~ t^{n} = 1 >$ and $G = < g ~|~ g^{m} = 1>$ be
two cyclic groups of orders $n$ respectively $m$ and consider the
group Hopf algebras $A = k[L]$ and $H = k[G]$. In this setting a
coalgebra map $f: k[G] \otimes k[G] \rightarrow k[L]$ is
completely determined by a map $\alpha: \{0, 1, ...,m-1\} \times
\{0, 1, ..., m-1\} \rightarrow \{0, 1, ..., n-1\}$ such that
$f(g^{i},\ g^{j}) = t^{\alpha(i,\ j)}$. Recall that the braidings
on a group Hopf algebra $k[L]$ are in one-to-one correspondence
with the bicharacters on $L$. More precisely, the braidings on
$k[L]$ are given by: $$p: k[L] \otimes k[L] \rightarrow k, \qquad
p(t^{a},\ t^{b}) = \tau^{ab}$$ where $a, b \in \overline{0, n-1}$
and $\tau \in k$ such that $\tau^{n}=1$. Then there exists $u:
k[L] \otimes k[G] \rightarrow k$ a $(p,f)$ - right skew pairing if
and only if $\alpha$ is a symmetric map and there exists $\upsilon
\in k$ such that $\upsilon^{n} = 1$ and $\upsilon^{m} =
\tau^{\alpha(1,\ m-1)}$. In this case the $(p, f)$ - right skew
pairing $u: k[L] \otimes k[G] \rightarrow k$ is given by:
$$u(t^{a},\ g^{b}) = \upsilon^{ab}\tau^{-\alpha(1,\ b-1)}, \qquad
a \in \overline{0, n-1}, b \in \overline{0, m-1}.$$
\end{examples}

\begin{definition}
Let $A$ and $H$ be two Hopf algebras, $f: H \otimes H \rightarrow
A$ a coalgebra map, $p: A\otimes A \rightarrow k$ a braiding on
$A$, $u: A \otimes H \rightarrow k$ a $(p , f)$ - right skew
pairing on $(A, H)$ and $v: H \otimes A \rightarrow k$ a $(p , f)$
- left skew pairing on $(H, A)$. A linear map $\tau: H \otimes H
\rightarrow k$ is called \textit{$(u,v)$ - skew braiding on $H$}
if the following compatibilities are fulfilled for all $h$, $g$,
$t \in H$:
\begin{enumerate}
\item[(SBR1)] $u\bigl( f(h_{(1)} , g_{(1)}) , t_{(1)}\bigl)
\tau(h_{(2)}g_{(2)} , t_{(2)}) = \tau(h , t_{(1)}) \tau(g ,
t_{(2)})$\\
\item[(SBR2)] $\tau(1 , h) = \varepsilon(h)$\\
\item[(SBR3)] $\tau(h_{(1)} , g_{(2)}t_{(2)}) v\bigl(h_{(2)} ,
f(g_{(1)} , t_{(1)}) \bigl) = \tau(h_{(1)} , t) \tau(h_{(2)} ,
g)$\\
\item[(SBR4)] $\tau(g , 1) = \varepsilon(g)$\\
\item[(SBR5)] $\tau(h_{(1)} , g_{(1)})h_{(2)}g_{(2)} =
g_{(1)}h_{(1)} \tau(h_{(2)} , g_{(2)})$
\end{enumerate}
\end{definition}

Remark that if $f = \varepsilon_{H} \otimes \varepsilon_{H}$ then
the $(u,v)$ - skew braiding $\tau$ is a regular braiding on $H$.

\begin{theorem}\thlabel{4.5}
Let $(A, H, \triangleright, f)$ be a crossed system of Hopf
algebras. The following are equivalent:
\begin{enumerate}
\item[1)] $(A \#_{f}^{\triangleright} \, H, \sigma)$ is a braided
Hopf algebra\\
\item[2)] There exist four linear maps $p: A \otimes A \rightarrow
k$, $\tau: H \otimes H \rightarrow k$, $u: A \otimes H \rightarrow
k$, $v: H \otimes A \rightarrow k$ such that $(A, p)$ is a braided
Hopf algebra, $u$ is a $(p,f)$ - right skew pairing on $(A, H)$,
$v$ is a $(p,f)$ - left skew pairing on $(H, A)$, $\tau$ is a $(u,
v)$ - skew braiding on $H$ and the following compatibilities are
fulfilled:
\end{enumerate}
\begin{eqnarray}
\eqlabel{4.1} v(h_{(1)} , b_{(1)}) (h_{(2)} \triangleright
b_{(2)}) \otimes h_{(3)} &{=}& b_{(1)} \otimes h_{(1)}v(h_{(2)} ,
b_{(2)})\\
\eqlabel{4.2} (g_{(1)} \triangleright a_{(1)}) \otimes
g_{(2)}u(a_{(2)} , g_{(3)}) &{=}& u(a_{(1)} , g_{(1)})a_{(2)}
\otimes
g_{(2)}\\
\eqlabel{4.3} \tau(h_{(1)} , g_{(1)}) f(h_{(2)} , g_{(2)}) &{=}&
f(g_{(1)} , h_{(1)}) \tau(h_{(2)} , g_{(2)})\\
\eqlabel{4.4} u(a_{(1)} , g_{(2)}) p(a_{(2)} , g_{(1)}
\triangleright c) &{=}& p(a_{(1)} , c) u(a_{(2)} , g)\\
\eqlabel{4.5} \tau(h_{(1)} , g_{(2)}) v(h_{(2)} , g_{(1)}
\triangleright c) &{=}& v(h_{(1)} , c) \tau(h_{(2)} , g)\\
\eqlabel{4.6} p(h_{(1)} \triangleright b , c_{(1)}) v(h_{(2)} ,
c_{(2)}) &{=}& v(h , c_{(1)}) p(b , c_{(2)})\\
\eqlabel{4.7} u(h_{(1)} \triangleright b , t_{(1)}) \tau(h_{(2)} ,
t_{(2)}) &{=}& \tau(h , t_{(1)}) u(b , t_{(2)})
\end{eqnarray}
and the braiding $\sigma: (A \#_{f}^{\triangleright} \, H) \otimes
(A \#_{f}^{\triangleright} \, H) \rightarrow k$ is given by:
\begin{equation}\eqlabel{39}
\sigma(a \# h, b \# g) = u(a_{(1)} , g_{(1)}) p(a_{(2)} , b_{(1)})
\tau(h_{(1)} , g_{(2)}) v(h_{(2)} , b_{(2)})
\end{equation}
for all $a, b, c \in A$ and $h, g, t \in H$.
\end{theorem}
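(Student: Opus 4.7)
The strategy is to establish a bijective correspondence between braidings $\sigma$ on $A \#_{f}^{\triangleright} \, H$ and quadruples $(p,\tau,u,v)$ satisfying the listed conditions, via restriction and extension. In the direction $1)\Rightarrow 2)$, given a braiding $\sigma$, I would define
$$p(a,b):=\sigma(a\#1,b\#1), \quad \tau(h,g):=\sigma(1\#h,1\#g), \quad u(a,g):=\sigma(a\#1,1\#g), \quad v(h,b):=\sigma(1\#h,b\#1).$$
Writing $a\#h=(a\#1)(1\#h)$ and applying (BR1) to the first argument of $\sigma$ followed by (BR3) to the second, together with the tensor-product coalgebra structure of $A\#_f^{\triangleright}\,H$, one recovers the factorization formula \equref{39}. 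Hence $\sigma$ is entirely determined by the four restricted maps, and it remains to show these satisfy the listed axioms.

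For the forward direction I would specialize each of (BR1)--(BR5) to all combinations of arguments drawn from $A\#1$ or $1\#H$. Taking all three arguments in $A\#1$ (respectively in $1\#H$) yields the axioms for $(A,p)$ as a braided Hopf algebra and a portion of those for $\tau$. The mixed applications of (BR1) and (BR3) produce (RS1)--(RS4) for $u$ and (LS1)--(LS4) for $v$, as well as (SBR1) and (SBR3) for $\tau$; these latter are the substantive cases, because the product in $A\#_f^{\triangleright}\,H$ of two elements of the form $1\#h$ and $1\#g$ is not $1\#hg$ but involves the cocycle $f$, and this is exactly what introduces the $f$-terms into (RS3), (LS1), (SBR1), (SBR3). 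Finally, (BR5) applied to one argument of the form $a\#1$ and one of the form $1\#h$ yields the mixed compatibilities \equref{4.1} and \equref{4.2}, while (BR1) and (BR3) applied to such pairs yield \equref{4.3}--\equref{4.7}.

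For the converse $2)\Rightarrow 1)$, I would define $\sigma$ by \equref{39} and verify (BR1)--(BR5) directly. Each axiom splits into cases according to whether each argument lies in $A\#1$ or $1\#H$; the pure cases reduce to the defining axioms for $p$ and $\tau$, and the mixed cases are handled by the skew-pairing axioms together with the compatibilities \equref{4.1}--\equref{4.7}. The main obstacle is verifying (BR5) for general elements $a\#h,\,b\#g$: expanding $\sigma(x_{(1)},y_{(1)})\,x_{(2)}y_{(2)}=y_{(1)}x_{(1)}\,\sigma(x_{(2)},y_{(2)})$ requires unfolding both the crossed-product multiplication \equref{001} (introducing $\triangleright$ and $f$) and the four-fold factored braiding \equref{39}. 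The resulting identity breaks into subexpressions in which \equref{4.1}, \equref{4.2}, \equref{4.6}, \equref{4.7} are used to move the action $\triangleright$ past $v, u, p, \tau$ respectively, \equref{4.3} is used to commute $f$ past $\tau$, and the coalgebra compatibilities \equref{co1} and \equref{co3} rearrange the Sweedler indices on the $H$-factors. Once these moves are performed, (BR5) for $\sigma$ reduces to the corresponding axioms (BR5) for $p$ and (SBR5) for $\tau$ combined with the pairing axioms for $u$ and $v$.
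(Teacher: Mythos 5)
Your proposal follows the paper's proof essentially verbatim: the same four restricted maps $p,\tau,u,v$, the same recovery of the factorization formula \equref{39} from (BR1) and (BR3), the same case analysis specializing the braiding axioms to arguments in $A\#1$ and $1\#H$, and the same direct verification of (BR1), (BR3), (BR5) for the converse. One step as you describe it would fail, however: you assert that \equref{4.3} is obtained from (BR1) and (BR3) applied to mixed arguments. This cannot work, because (BR1) and (BR3) are identities of scalars (values of $\sigma$ in $k$), whereas \equref{4.3} is an identity between elements of $A$ --- both sides contain the $A$-valued cocycle $f$. Like \equref{4.1} and \equref{4.2}, it has to be extracted from the quasi-commutativity axiom (BR5), which is an identity in $A \#_{f}^{\triangleright} \, H$ itself; the paper takes $a=b=1$ in (BR5) and applies $Id \otimes \varepsilon$ to the resulting $A\otimes H$-valued relation (the complementary application of $\varepsilon \otimes Id$ gives (SBR5)). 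With that single correction your outline coincides with the paper's argument.
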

\begin{proof}
Suppose first that $(A \#_{f}^{\triangleright} \, H, \sigma)$ is a
braided Hopf algebra. We define the following linear maps:
\begin{eqnarray*}
p: A \otimes A \rightarrow k, \qquad p(a, b) &=& \sigma(a \otimes
1
, b \otimes 1)\\
\tau: H \otimes H \rightarrow k, \qquad \tau(h, g) &=& \sigma(1
\otimes h , 1 \otimes g)\\
u: A \otimes H \rightarrow k, \qquad u(a , h) &=& \sigma(a \otimes
1
, 1 \otimes h)\\
v: H \otimes A \rightarrow k, \qquad v(h, a) &=& \sigma(1 \otimes
h, a \otimes 1)
\end{eqnarray*}
Before going into the proof we collect here some compatibilities
satisfied by the maps defined above which will be useful in the
sequel. The following are just easy consequences of the fact that
$\sigma$ is a braiding on $A \#_{f}^{\triangleright} \, H$ and,
hence, it satisfies relations (BR2) and (BR4):
\begin{eqnarray}
\eqlabel{a} p(1 , b) = \varepsilon(b) = p(b , 1)\\
\eqlabel{b} \tau(1 , h) = \varepsilon(h) = \tau(h , 1)\\
\eqlabel{c} u(1 , h) = \varepsilon(h) , \quad u(a , 1) =
\varepsilon(a)\\
\eqlabel{d} v(1 , a) = \varepsilon(a) , \quad v(h , 1) =
\varepsilon(h)
\end{eqnarray}
Remark that from relation \equref{b} it follows that $\tau$
fulfills (SBR2) and (SBR4) while from relation \equref{d} we can
derive that $v$ fulfills (LS2) and (LS4).

It is easy to see that relation \equref{39} indeed holds:
\begin{eqnarray*}
&&\sigma(a \# h,\ b \# g) = \sigma \bigl((a \# 1)(1 \# h), (b \# 1)(1 \# g)\bigl) = \\
&\stackrel{(BR1)}{=}& \sigma \bigl(a \# 1, (b_{(1)} \# 1)(1 \#
g_{(1)})\bigl) \sigma \bigl((1 \# h), (b_{(2)} \# 1)(1 \#
g_{(2)})\bigl)\\
&\stackrel{(BR3)}{=}& \sigma (a_{(1)} \# 1 , 1 \# g_{(1)}) \sigma
(a_{(2)} \# 1 , b_{(1)} \# 1) \sigma (1 \# h_{(1)} , 1 \# g_{(2)})
\sigma \bigl(1 \# h_{(2)} , b_{(2)} \# 1)\\
&{=}&u(a_{(1)} , g_{(1)}) p(a_{(2)} , b_{(1)}) \tau(h_{(1)} ,
g_{(2)}) v(h_{(2)} , b_{(2)})
\end{eqnarray*}
Next we prove that $(A, p)$ is a braided Hopf algebra, $u$ is a
$(p,f)$ - right skew pairing on $(A, H)$, $v$ is a $(p,f)$ - left
skew pairing on $(H, A)$ and $\tau$ is a $(u, v)$ - skew braiding
on $H$. Having in mind that $(A \#_{f}^{\triangleright} \, H,
\sigma)$ is a braided Hopf algebra it is straightforward to see
that $(A,p)$ is a braided Hopf algebra by considering $x = a \#
1$, $y = b \# 1$ and $z = c \# 1$ in (BR1) $-$ (BR5). As $(A
\#_{f}^{\triangleright} \, H, \sigma)$ is braided we get from
(BR1):
\begin{eqnarray}
\eqlabel{(BR1)} \sigma \bigl(a(h_{(1)} \triangleright b) f(h_{(2)}
, g_{(1)}) \# h_{(3)}g_{(2)} , c \# t \bigl) = \sigma ( a \otimes
h, c_{(1)} \otimes t_{(1)}) \sigma(b \otimes g, c_{(2)} \otimes
t_{(2)})
\end{eqnarray}
By considering $h = g = 1$ and $c = 1$ we get relation (RS1).
Moreover, by (BR3) we have:
\begin{eqnarray}
\eqlabel{(BR3)} \sigma \bigl(a \# h , b(g_{(1)} \triangleright c)
f(g_{(2)} , t_{(1)}) \# g_{(3)}t_{(2)} \bigl)  = \sigma ( a_{(1)}
\otimes h_{(1)} , c \otimes t) \sigma(a_{(2)} \otimes h_{(2)}, b
\otimes g)
\end{eqnarray}
Considering $b = c = 1$ and $h = 1$ in the above relation and
using the decomposition of $\sigma$ from \equref{39} we get:
$$
u(a_{(1)} , g_{(3)}t_{(3)}) p\bigl(a_{(2)} , f(g_{(1)} ,
t_{(1)})\bigl) \tau(1 , g_{(4)}t_{(4)}) v\bigl(1 , f(g_{(2)} ,
t_{(2)})\bigl) = u(a_{(1)} , t) u(a_{(2)} , g)
$$
Now using relations \equref{b} and \equref{d} we get (RS3). Hence
we proved that $u$ is a $(p,f)$ - right skew pairing on $(A, H)$.
Using \equref{(BR1)} again for $a = b = 1$ and $t = 1$ we get:
\begin{eqnarray*}
u \bigl(f(h_{(1)} , g_{(1)}) , 1\bigl) p \bigl(f(h_{(2)} ,
g_{(2)}) , c_{(1)}\bigl) \tau(h_{(3)}g_{(3)} , 1) v(h_{(4)}g_{(4)}
, c_{(2)}) = v(h , c_{(1)}) v(g , c_{(2)})
\end{eqnarray*}
Using \equref{b} and \equref{c} we get that (LS1) holds for $v$.
Moreover from \equref{(BR3)} applied to $g = t = 1$ and $a = 1$ we
get $v(h , bc) = v(h_{(1)} , c)v(h_{(2)} , b)$ that is, (LS3) also
holds for $v$ and we proved that $v$ is indeed a $(p,f)$ - left
skew pairing on $(H, A)$. Next we apply \equref{(BR1)} for $a = b
= c = 1$:
\begin{eqnarray*}
u \bigl(f(h_{(1)} , g_{(1)}) , t_{(1)}\bigl) p \bigl(f(h_{(2)} ,
g_{(2)}) , 1\bigl) \tau(h_{(3)}g_{(3)} , t_{(2)}) v(h_{(4)}g_{(4)}
, 1) = \tau(h , t_{(1)}) \tau(g , t_{(2)})
\end{eqnarray*}
Using \equref{a} and \equref{d} we obtain (SBR1). Now from
\equref{(BR3)} applied for $a = b = c = 1$ we get:
\begin{eqnarray*}
u(1 , g_{(3)}t_{(3)}) p\bigl(1 , f(g_{(1)} , t_{(1)})\bigl) v
\bigl(h_{(2)} , f(g_{(2)} , t_{(2)})\bigl) \tau(h_{(1)} ,
g_{(4)}t_{(4)}) = \tau(h_{(1)} , t) \tau(h_{(2)} , g)
\end{eqnarray*}
From \equref{a} and \equref{c} we obtain that (SBR3) holds for
$\tau$. Furthermore, as $(A \#_{f}^{\triangleright} \, H, \sigma)$
is braided we also have from (BR5):
$$\sigma(a_{(1)} \otimes h_{(1)} , b_{(1)} \otimes
g_{(1)}) a_{(2)}(h_{(2)} \triangleright b_{(2)}) f(h_{(3)} ,
g_{(2)}) \otimes h_{(4)}g_{(3)}$$
\begin{eqnarray}
\eqlabel{(BR5)} = b_{(1)} (g_{(1)} \triangleright a_{(1)})
f(g_{(2)} , h_{(1)}) \otimes g_{(3)}h_{(2)} \sigma(a_{(2)} \otimes
h_{(3)} , b_{(2)} \otimes g_{(4)})
\end{eqnarray}
Considering $a = b = 1$ in \equref{(BR5)} we get:
\begin{eqnarray*}
\tau(h_{(1)} , g_{(1)})f(h_{(2)} , g_{(2)}) \# h_{(3)}g_{(3)} =
f(g_{(1)} , h_{(1)}) \# g_{(2)}h_{(2)} \tau(h_{(3)} , g_{(3)})
\end{eqnarray*}
Having in mind that $f$ is a coalgebra map we obtain, by applying
$\varepsilon \otimes Id$, that (SBR5) holds for $\tau$ and thus
$\tau$ is a $(u, v)$ - skew braiding on $H$. We still need to
prove that the compatibilities \equref{4.1} - \equref{4.7} hold.
Compatibilities \equref{4.1} - \equref{4.2} are obtained from
\equref{(BR5)} by considering: $a = 1$ and $g = 1$ respectively $b
= 1$ and $h = 1$ while \equref{4.3} can be derived from
\equref{(BR5)} by considering $a = b = 1$ and then applying $Id
\otimes \varepsilon$. The next two compatibilities, \equref{4.4}
and \equref{4.5}, can be obtained by letting $h = t = 1$ and $b =
1$ respectively $a = b = 1$ and $t = 1$ in \equref{(BR3)}. To this
end, relations \equref{4.6} and \equref{4.7} can be derived from
\equref{(BR1)} by considering $g = t = 1$ and $a = 1$ respectively
$a = c = 1$ and $g = 1$.

Assume now that $(A, p)$ is a braided Hopf algebra, $u$ is a
$(p,f)$ - right skew pairing on $(A, H)$, $v$ is a $(p,f)$ - left
skew pairing on $(H, A)$, $\tau$ is a $(u, v)$ - skew braiding on
$H$ and $\sigma$ is given by \equref{39} such that the
compatibilities \equref{4.1} - \equref{4.7} are fulfilled. Then,
using relations (RS2), (SBR2), (LS2) and the fact that $p$ is a
braiding we can prove that for all $a \in A$, $h \in H$ we have:
\begin{eqnarray*}
\sigma(1 \# 1 , a \# h) &{=}& u(1, h_{(1)}) p(1 , a_{(1)}) \tau(1
, h_{(2)}) v(1 , a_{(2)})\\
&{=}& \varepsilon(a)
\varepsilon(h)\\
&{=}& \varepsilon(a \# h)
\end{eqnarray*}
Moreover, using relations (RS4), (SBR4), (LS4) and again the fact
that $p$ is a braiding, we also have:
\begin{eqnarray*}
\sigma(a \# h, 1 \# 1) &{=}& u(a_{(1)} , 1) p(a_{(2)} , 1)
\tau(h_{(1)} , 1) v(h_{(2)} , 1)\\
&{=}& \varepsilon(a)
\varepsilon(h)\\
&{=}& \varepsilon(a \# h)
\end{eqnarray*}
for all $a \in A$, $h \in H$. Hence $\sigma$ also fulfills (BR4).

To prove that $\sigma$ satisfies (BR1) we start by first computing
the left hand side. Thus for all  $a, b, c \in A$ and $h, g, t \in
H$ we have:

\begin{eqnarray*}
LHS &{=}& \underline{u\bigl(a_{(1)}(h_{(1)} \triangleright
b_{(1)}) f(h_{(3)} , g_{(1)}) , t_{(1)}\bigl) p
\bigl(a_{(2)}(h_{(2)}
\triangleright b_{(2)}) f(h_{(4)} , g_{(2)}) , c_{(1)}\bigl)}\\
&& \tau(h_{(5)}g_{(3)} , t_{(2)}) v(h_{(6)}g_{(4)}, c_{(2)})\\
&\stackrel{(RS1)}{=}& u(a_{(1)} , t_{(1)}) u(h_{(1)}
\triangleright b_{(1)} , t_{(2)}) u\bigl(f(h_{(3)} , g_{(1)}) ,
t_{(3)}\bigl) p(a_{(2)} , c_{(1)})\\
&&p(h_{(2)} \triangleright b_{(2)} ,
c_{(2)})p\bigl(\underline{f(h_{(4)} , g_{(2)})} , c_{(3)}\bigl)
\tau(\underline{h_{(5)}g_{(3)}} , t_{(4)}) v(h_{(6)}g_{(4)},
c_{(4)})
\end{eqnarray*}
\begin{eqnarray*}
&\stackrel{\equref{co3}}{=}& u(a_{(1)} , t_{(1)}) u(h_{(1)}
\triangleright b_{(1)} , t_{(2)}) u\bigl(f(h_{(3)} , g_{(1)}) ,
t_{(3)}\bigl) p(a_{(2)} , c_{(1)})\\
&&p(h_{(2)} \triangleright b_{(2)} ,
c_{(2)})\underline{p\bigl(f(h_{(5)} , g_{(3)}) , c_{(3)}\bigl)}
\tau(h_{(4)}g_{(2)} , t_{(4)})
\underline{v(h_{(6)}g_{(4)}, c_{(4)})}\\
&\stackrel{(LS1)}{=}& u(a_{(1)} , t_{(1)}) u(h_{(1)}
\triangleright b_{(1)} , t_{(2)}) \underline{u\bigl(f(h_{(3)} ,
g_{(1)}) , t_{(3)}\bigl)} p(a_{(2)} , c_{(1)})\\
&&p(h_{(2)} \triangleright b_{(2)} , c_{(2)})
\underline{\tau(h_{(4)}g_{(2)} , t_{(4)})} v(h_{(5)} , c_{(3)})
v(g_{(3)} , c_{(4)})\\
&\stackrel{(SBR1)}{=}& u(a_{(1)} , t_{(1)}) u(h_{(1)}
\triangleright b_{(1)} , t_{(2)}) p(a_{(2)} , c_{(1)}) p(h_{(2)}
\triangleright b_{(2)} , c_{(2)})\\
&& \tau(h_{(3)} , t_{(3)}) \tau(g_{(1)} , t_{(4)}) v(h_{(4)} ,
c_{(3)}) v(g_{(2)} , c_{(4)})\\
&\stackrel{\equref{co1}}{=}& u(a_{(1)} , t_{(1)}) u(h_{(1)}
\triangleright b_{(1)} , t_{(2)}) p(a_{(2)} , c_{(1)})
\underline{p(h_{(3)}
\triangleright b_{(2)} , c_{(2)})}\\
&& \tau(h_{(2)} , t_{(3)}) \tau(g_{(1)} , t_{(4)})
\underline{v(h_{(4)} ,
c_{(3)})} v(g_{(2)} , c_{(4)})\\
&\stackrel{\equref{4.6}}{=}& u(a_{(1)} , t_{(1)})
\underline{u(h_{(1)} \triangleright b_{(1)} , t_{(2)})} p(a_{(2)}
, c_{(1)})
\underline{\tau(h_{(2)} , t_{(3)})} \tau(g_{(1)} , t_{(4)})\\
&& v(h_{(3)} , c_{(2)}) p(b_{(2)} , c_{(3)}) v(g_{(2)} , c_{(4)})\\
&\stackrel{\equref{4.7}}{=}& u(a_{(1)} , t_{(1)})
 p(a_{(2)}
, c_{(1)}) \tau(h_{(1)} , t_{(2)}) u(b_{(1)} , t_{(3)}) \tau(g_{(1)} , t_{(4)})\\
&& v(h_{(2)} , c_{(2)}) p(b_{(2)} , c_{(3)}) v(g_{(2)} , c_{(4)})\\
&{=}& RHS
\end{eqnarray*}
where in the second equality we also used the fact that $p$ is a
braiding. To prove (BR3) we start again by computing the left hand
side. Thus for all $a, b, c \in A$ and $h, g, t \in H$ we have:
\begin{eqnarray*}
LHS &{=}& u(a_{(1)} , g_{(5)}t_{(3)}) \underline{p\bigl(a_{(2)} ,
b_{(1)}(g_{(1)} \triangleright c_{(1)}) f(g_{(3)} ,
t_{(1)})\bigl)}
\tau(h_{(1)} , g_{(6)}t_{(4)})\\
&&\underline{v\bigl(h_{(2)} , b_{(2)}(g_{(2)} \triangleright
c_{(2)}) f(g_{(4)} , t_{(2)})\bigl)}\\
&\stackrel{(LS3)}{=}& u(a_{(1)} , \underline{g_{(5)}t_{(3)}})
p\bigl(a_{(2)} , f(g_{(3)} , t_{(1)})\bigl) p(a_{(3)} , g_{(1)}
\triangleright c_{(1)}) p(a_{(4)} , b_{(1)})\\
&& \tau(h_{(1)} , g_{(6)}t_{(4)})v\bigl(h_{(2)} ,
\underline{f(g_{(4)} , t_{(2)})}\bigl) v(h_{(3)} , g_{(2)}
\triangleright c_{(2)}) v(h_{(4)} , b_{(2)})\\
&\stackrel{\equref{co3}}{=}& u(a_{(1)} , g_{(4)}t_{(2)})
p\bigl(a_{(2)} , f(g_{(3)} , t_{(1)})\bigl) p(a_{(3)} , g_{(1)}
\triangleright c_{(1)}) p(a_{(4)} , b_{(1)})\\
&& \underline{\tau(h_{(1)} , g_{(6)}t_{(4)})v\bigl(h_{(2)} ,
f(g_{(5)} , t_{(3)})\bigl)} v(h_{(3)} , g_{(2)}
\triangleright c_{(2)}) v(h_{(4)} , b_{(2)})\\
&\stackrel{(SBR3)}{=}& \underline{u(a_{(1)} , g_{(4)}t_{(2)})
p\bigl(a_{(2)} , f(g_{(3)} , t_{(1)})\bigl)} p(a_{(3)} , g_{(1)}
\triangleright c_{(1)}) p(a_{(4)} , b_{(1)})\\
&& \tau(h_{(1)} , t_{(3)})\tau(h_{(2)} , g_{(5)}) v(h_{(3)} ,
g_{(2)}
\triangleright c_{(2)}) v(h_{(4)} , b_{(2)})\\
&\stackrel{(RS3)}{=}& u(a_{(1)} , t_{(1)}) u(a_{(2)} ,
\underline{g_{(3)}}) p(a_{(3)} , g_{(1)} \triangleright c_{(1)})
p(a_{(4)} , b_{(1)})
\tau(h_{(1)} , t_{(2)})\\
&&\tau(h_{(2)} , g_{(4)}) v(h_{(3)} , \underline{g_{(2)}
\triangleright c_{(2)}}) v(h_{(4)} , b_{(2)})\\
&\stackrel{\equref{co1}}{=}& u(a_{(1)} , t_{(1)})
\underline{u(a_{(2)} , g_{(2)}) p(a_{(3)} , g_{(1)} \triangleright
c_{(1)})} p(a_{(4)} , b_{(1)})
\tau(h_{(1)} , t_{(2)})\\
&&\tau(h_{(2)} , g_{(4)}) v(h_{(3)} , g_{(3)}
\triangleright c_{(2)}) v(h_{(4)} , b_{(2)})\\
\end{eqnarray*}
\begin{eqnarray*}
&\stackrel{\equref{4.4}}{=}& u(a_{(1)} , t_{(1)})p(a_{(2)} ,
c_{(1)})u(a_{(3)} , g_{(1)}) p(a_{(4)} , b_{(1)})
\tau(h_{(1)} , t_{(2)})\\
&&\underline{\tau(h_{(2)} , g_{(3)}) v(h_{(3)} , g_{(2)}
\triangleright c_{(2)})} v(h_{(4)} , b_{(2)})\\
&\stackrel{\equref{4.5}}{=}& u(a_{(1)} , t_{(1)})p(a_{(2)} ,
c_{(1)})u(a_{(3)} , g_{(1)}) p(a_{(4)} , b_{(1)})
\tau(h_{(1)} , t_{(2)})\\
&&v(h_{(2)} , c_{(2)})\tau(h_{(3)} , g_{(2)}) v(h_{(4)} , b_{(2)})\\
&{=}& RHS
\end{eqnarray*}
Note that in the second equality we used the fact that $p$ is a
braiding. In order to show that $\sigma$ also fulfills (BR5) we
need the following compatibilities that can be easily derived from
\equref{4.1} and \equref{4.2} by applying $\varepsilon \otimes
Id$:
\begin{eqnarray}
\eqlabel{1.2} v(h_{(1)} , b)h_{(2)} &{=}& h_{(1)}v(h_{(2)} , b)\\
\eqlabel{2.2} g_{(1)} u(a , g_{(2)}) &{=}& u(a , g_{(1)}) g_{(2)}
\end{eqnarray}
Finally, to prove (BR5) we only need to show that the following
equality holds for all $a, b \in A$ and $h, g \in H$:
\begin{eqnarray*}
u(a_{(1)} , g_{(1)})p(a_{(2)} , b_{(1)}) \tau(h_{(1)} , g_{(2)})
v(h_{(2)} , b_{(2)})a_{(3)}(h_{(3)} \triangleright
b_{(3)})f(h_{(4)} , g_{(3)}) \# h_{(5)}g_{(4)} = \\
= b_{(1)}(g_{(1)} \triangleright a_{(1)})f(g_{(2)} , h_{(1)}) \#
g_{(3)}h_{(2)}u(a_{(2)} , g_{(4)}) p(a_{(3)} , b_{(2)})
\tau(h_{(3)} , g_{(5)}) v(h_{(4)} , b_{(3)})
\end{eqnarray*}
Computing the left hand side of (BR5) we obtain:
\begin{eqnarray*}
LHS&{=}&u(a_{(1)} , g_{(1)})p(a_{(2)} , b_{(1)}) \tau(h_{(1)} ,
g_{(2)}) v(h_{(2)} , b_{(2)})a_{(3)}(h_{(3)} \triangleright
b_{(3)})\underline{f(h_{(4)} , g_{(3)})}\\
&& \# \underline{h_{(5)}g_{(4)}}\\
&\stackrel{\equref{co3}}{=}& u(a_{(1)} , g_{(1)})p(a_{(2)} ,
b_{(1)}) \tau(h_{(1)} , g_{(2)}) \underline{v(h_{(2)} ,
b_{(2)})}a_{(3)}\underline{(h_{(3)} \triangleright
b_{(3)})}f(h_{(5)} , g_{(4)})\\
&& \# \underline{h_{(4)}}g_{(3)}\\
&\stackrel{\equref{4.1}}{=}& u(a_{(1)} ,
g_{(1)})\underline{p(a_{(2)} ,
b_{(1)})} \tau(h_{(1)} , g_{(2)})\underline{a_{(3)}b_{(2)}}f(h_{(4)} , g_{(4)})\# h_{(2)}g_{(3)}v(h_{(3)} , b_{(3)})\\
&{=}& u(a_{(1)} , g_{(1)})\underline{\tau(h_{(1)} , g_{(2)})}b_{(1)}a_{(2)}p(a_{(3)} , b_{(2)})f(h_{(4)} , g_{(4)})\# \underline{h_{(2)}g_{(3)}}v(h_{(3)} , b_{(3)})\\
&\stackrel{(SBR5)}{=}& \underline{u(a_{(1)} ,
g_{(1)})}b_{(1)}\underline{a_{(2)}}p(a_{(3)} , b_{(2)})
f(h_{(4)} , g_{(4)})\#\underline{g_{(2)}}h_{(1)}\tau(h_{(2)} , g_{(3)})v(h_{(3)} , b_{(3)})\\
&\stackrel{\equref{4.2}}{=}& b_{(1)}(g_{(1)} \triangleright
a_{(1)})p(a_{(3)} , b_{(2)})
f(h_{(4)} , g_{(5)})\#\underline{g_{(2)}}h_{(1)}\underline{u(a_{(2)} , g_{(3)})}\tau(h_{(2)} , g_{(4)})\\
&&v(h_{(3)} , b_{(3)})\\
&\stackrel{\equref{2.2}}{=}& b_{(1)}(g_{(1)} \triangleright
a_{(1)})p(a_{(3)} , b_{(2)})
f(\underline{h_{(4)}} , g_{(5)})\#g_{(3)}h_{(1)}u(a_{(2)} , g_{(2)})\tau(h_{(2)} , g_{(4)})\\
&&\underline{v(h_{(3)} , b_{(3)})}\\
\end{eqnarray*}
\begin{eqnarray*}
&\stackrel{\equref{1.2}}{=}& b_{(1)}(g_{(1)} \triangleright
a_{(1)})p(a_{(3)} , b_{(2)})
\underline{f(h_{(3)} , g_{(5)})}\#g_{(3)}h_{(1)}u(a_{(2)} , g_{(2)})\underline{\tau(h_{(2)} , g_{(4)})}\\
&& v(h_{(4)} , b_{(3)})\\
&\stackrel{\equref{4.3}}{=}& b_{(1)}(g_{(1)} \triangleright
a_{(1)})p(a_{(3)} , b_{(2)})v(h_{(4)} , b_{(3)})\tau(h_{(3)} ,
g_{(5)})\underline{f(g_{(4)} , h_{(2)})}\\
&& \#\underline{g_{(3)}h_{(1)}}u(a_{(2)} , g_{(2)})\\
&\stackrel{\equref{co3}}{=}& b_{(1)}(g_{(1)} \triangleright
a_{(1)})p(a_{(3)} , b_{(2)})v(h_{(4)} , b_{(3)})\tau(h_{(3)} ,
g_{(5)})f(\underline{g_{(3)}} , h_{(1)})\\
&& \#g_{(4)}h_{(2)}\underline{u(a_{(2)} , g_{(2)})}\\
&\stackrel{\equref{2.2}}{=}& b_{(1)}(g_{(1)} \triangleright
a_{(1)})p(a_{(3)} , b_{(2)})v(h_{(4)} , b_{(3)})\tau(h_{(3)} ,
g_{(5)})f(g_{(2)} , h_{(1)})\\
&& \#\underline{g_{(4)}}h_{(2)}\underline{u(a_{(2)} , g_{(3)})}\\
&\stackrel{\equref{2.2}}{=}& b_{(1)}(g_{(1)} \triangleright
a_{(1)})p(a_{(3)} , b_{(2)})v(h_{(4)} , b_{(3)})\tau(h_{(3)} ,
g_{(5)})f(g_{(2)} , h_{(1)})\\
&& \#g_{(3)}h_{(2)}u(a_{(2)} , g_{(4})\\
&{=}&RHS
\end{eqnarray*}
In the fourth equality we used the fact that $p$ is a braiding.
Thus (BR5) holds for $\sigma$ and this ends the proof.
\end{proof}

The next Corollary gives necessary and sufficient conditions for a
smash product Hopf algebra to be braided. The result can also be
derived from \cite[Theorem 3.4]{JB2} by considering $T: H \otimes
B \rightarrow B \otimes H$ given by $T(h \otimes b) = h_{(1)}
\triangleright b \otimes h_{(2)}$, where $B$ is a left $H$-module
bialgebra via $\triangleright$.

\begin{corollary}\colabel{4.7}
Let $(A, H, \triangleright, f)$ a crossed system of Hopf algebras
such that $f$ is trivial. The following are equivalent:
\begin{enumerate}
\item[1)] $(A \#^{\triangleright} \, H, \sigma)$ is a braided
Hopf algebra\\
\item[2)] There exist four linear maps $p: A \otimes A \rightarrow
k$, $\tau: H \otimes H \rightarrow k$, $u: A \otimes H \rightarrow
k$, $v: H \otimes A \rightarrow k$ such that $(A, p)$ and $(H,
\tau)$ are braided Hopf algebras, $u$ and $v$ are skew pairings,
the compatibilities \equref{4.1} - \equref{4.2} and \equref{4.4} -
\equref{4.7} are fulfilled and the braiding $\sigma$ is given by
\equref{39}.
\end{enumerate}
\end{corollary}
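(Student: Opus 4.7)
The plan is to obtain this Corollary as a direct specialization of \thref{4.5} to the case of the trivial cocycle $f = \varepsilon_H \otimes \varepsilon_H$. Concretely, I would invoke \thref{4.5} and then show that under this hypothesis on $f$ the notions appearing in its statement reduce exactly to the ones appearing in the Corollary, and that the compatibility \equref{4.3} is automatically satisfied.

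First, I would record the immediate simplifications that occur when $f = \varepsilon_H \otimes \varepsilon_H$. By the first example after the definitions, the notions of $(p,f)$-right skew pairing on $(A,H)$ and $(p,f)$-left skew pairing on $(H,A)$ collapse to ordinary skew pairings: in (RS3) the summand $p(a_{(2)}, f(g_{(1)}, t_{(1)}))$ becomes $\varepsilon(a_{(2)})\varepsilon(g_{(1)})\varepsilon(t_{(1)})$, which together with (RS1), (RS2) and (RS4) yields exactly the skew pairing axioms (BR1)--(BR4) for $u$ on $(A,H)$, and a symmetric argument handles (LS1)--(LS4) for $v$. Similarly, by the remark following the definition of a $(u,v)$-skew braiding, the conditions (SBR1)--(SBR5) on $\tau$ reduce to the braiding axioms (BR1)--(BR5) on $H$, so $(H,\tau)$ becomes a braided Hopf algebra.

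Next, I would verify that \equref{4.3} becomes vacuous. Indeed, substituting $f(h,g) = \varepsilon(h)\varepsilon(g)1_A$ gives
\begin{equation*}
\tau(h_{(1)},g_{(1)}) f(h_{(2)},g_{(2)}) = \tau(h,g)\,1_A = f(g_{(1)},h_{(1)}) \tau(h_{(2)},g_{(2)}),
\end{equation*}
so the compatibility holds automatically and can be dropped from the list.

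Having made these observations, both implications become transparent: for $(1)\Rightarrow(2)$ one applies \thref{4.5} to produce the four maps $p,\tau,u,v$ and the compatibilities \equref{4.1}, \equref{4.2}, \equref{4.4}--\equref{4.7} (as well as \equref{4.3}, which is now automatic); the reductions above then upgrade $\tau$ to a braiding on $H$ and $u$, $v$ to skew pairings. Conversely, given the data of $(2)$, the same reductions show that $(A,p)$ is braided, $u$ is a $(p,f)$-right skew pairing, $v$ is a $(p,f)$-left skew pairing, $\tau$ is a $(u,v)$-skew braiding, \equref{4.3} holds, and $\sigma$ is defined as in \equref{39}, so the converse direction of \thref{4.5} applies. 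I do not expect any genuine obstacle here: every step is a direct bookkeeping consequence of inserting $f = \varepsilon_H \otimes \varepsilon_H$ into the statement of \thref{4.5}, so the proof will essentially read \emph{``apply \thref{4.5} and observe that the conditions simplify as above''}.
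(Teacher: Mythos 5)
Your proposal is correct and takes essentially the same route as the paper, which also obtains the Corollary by specializing \thref{4.5}: it observes that \equref{4.3} is trivially fulfilled for the trivial cocycle and that (RS3) and (LS1) collapse to (BR3) and (BR1), so that $u$ and $v$ become ordinary skew pairings and $\tau$ a genuine braiding on $H$. You merely spell out this bookkeeping in more detail than the paper's one-sentence argument.
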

\begin{proof}
It is straightforward to see that if $f$ is the trivial cocycle
then relation \equref{4.3} from \thref{4.5} is trivially fulfilled
while relations (RS3) and (LS1) which are satisfied by $u$ and $v$
collapse to (BR3) respectively (BR1).
\end{proof}

\begin{corollary}
Let $(A, H, \triangleright, f)$ be a crossed system of Hopf
algebras with $A$ a commutative Hopf algebra, $\triangleright$ the
trivial action and $(H, \tau)$ a braided Hopf algebra such that:
\begin{eqnarray*}
\tau(h_{(1)} , g_{(1)}) f(h_{(2)} , g_{(2)}) &{=}& f(g_{(1)} ,
h_{(1)}) \tau(h_{(2)} , g_{(2)})
\end{eqnarray*}
for all $h, g \in H$. Then $A \#^{\triangleright} \, H$ is a
braided Hopf algebra with the braiding given by:
$$\sigma(a \# h , b \# g) = \varepsilon(a) \varepsilon(b) \tau(h ,
g)$$ for all $h, g \in H$.
\end{corollary}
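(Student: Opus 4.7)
The plan is to derive this corollary as a direct application of \thref{4.5} with the simplest possible choice of auxiliary data. Specifically, I would take
$$p: A\otimes A \to k, \ p(a,b) = \varepsilon(a)\varepsilon(b), \quad u: A\otimes H \to k, \ u(a,h)=\varepsilon(a)\varepsilon(h),$$
$$v: H\otimes A \to k, \ v(h,a) = \varepsilon(h)\varepsilon(a),$$
together with the given braiding $\tau$ on $H$. Then \equref{39} immediately reduces to $\sigma(a\#h, b\#g) = \varepsilon(a)\varepsilon(b)\tau(h,g)$, which is the claimed formula, so the work consists in verifying that the hypotheses of \thref{4.5} hold.

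First I would check that $(A,p)$ is a braided Hopf algebra. Conditions (BR1)--(BR4) for the trivial $p$ are immediate from multiplicativity of $\varepsilon$, while (BR5) reduces to $x_{(1)}y_{(1)}\varepsilon(x_{(2)})\varepsilon(y_{(2)}) = y_{(1)}x_{(1)}\varepsilon(x_{(2)})\varepsilon(y_{(2)})$, i.e.\ $xy=yx$, which holds by commutativity of $A$. Next I would verify that $u$ is a $(p,f)$--right skew pairing and $v$ is a $(p,f)$--left skew pairing. Axioms (RS1), (RS2), (RS4) and (LS2), (LS3), (LS4) are all trivial from multiplicativity of $\varepsilon$; the only nontrivial checks (RS3) and (LS1) use that $f$ is a coalgebra map, so $\varepsilon\circ f = \varepsilon\otimes\varepsilon$, which makes both sides collapse to $\varepsilon(a)\varepsilon(g)\varepsilon(t)$ in (RS3) and to $\varepsilon(h)\varepsilon(g)\varepsilon(c)$ in (LS1).

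For $\tau$ to be a $(u,v)$--skew braiding on $H$, note that since $u$ and $v$ are trivial, axioms (SBR1)--(SBR4) reduce to the corresponding braiding axioms (BR1)--(BR4) for $\tau$ on $H$ (again using $\varepsilon\circ f = \varepsilon\otimes\varepsilon$), while (SBR5) is precisely (BR5) for the braiding $\tau$. Finally, I would dispatch the compatibilities \equref{4.1}--\equref{4.7}: with $\triangleright$ trivial and $p,u,v$ trivial, relations \equref{4.1}, \equref{4.2}, \equref{4.4}, \equref{4.5}, \equref{4.6}, \equref{4.7} each reduce on both sides to the same scalar multiple of $a\otimes h$ or the analogous tensor, and \equref{4.3} is precisely the hypothesis of the corollary.

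There is no real obstacle here; the statement is a specialization of \thref{4.5} and the entire proof is bookkeeping to confirm that each of the axioms degenerates correctly. The only point that requires a moment's attention is recognizing that coalgebra-multiplicativity of $f$ and $\triangleright$ is what allows the trivial $u,v,p$ to satisfy (RS3) and (LS1), and that hypothesis \equref{4.3} of the corollary is recorded verbatim as the last remaining compatibility from \thref{4.5}.
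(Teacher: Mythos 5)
Your proposal is correct and follows exactly the paper's own argument: choose the trivial braiding $p=\varepsilon_A\otimes\varepsilon_A$ (valid by commutativity of $A$) and the trivial pairings $u=\varepsilon_A\otimes\varepsilon_H$, $v=\varepsilon_H\otimes\varepsilon_A$, observe that a $(u,v)$-skew braiding is then just an ordinary braiding on $H$, and apply \thref{4.5}, with \equref{4.3} being the stated hypothesis. The paper merely states these reductions without the axiom-by-axiom verification you spell out; there is no difference in substance.
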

\begin{proof}
Since $A$ is commutative we can consider on $A$ the trivial
braiding given by $p = \varepsilon_{A} \otimes \varepsilon_{A}$.
Moreover $u = \varepsilon_{A} \otimes \varepsilon_{H}$ is a
$(p,f)$ - right skew pairing on $(A, H)$ and $v = \varepsilon_{H}
\otimes \varepsilon_{A}$ is a $(p, f)$ - left skew pairing on $(H,
A)$. In this case, an $(u, v)$ - skew braiding on $H$ is actually
a regular braiding and the conclusion follows by \thref{4.5}.
\end{proof}

\begin{corollary}\colabel{4.8}
Let $A$ and $H$ be Hopf algebras. The following are equivalent:
\begin{enumerate}
\item[1)] $(A \otimes H, \sigma)$ is a braided
Hopf algebra\\
\item[2)] There exist four linear maps $p: A \otimes A \rightarrow
k$, $\tau: H \otimes H \rightarrow k$, $u: A \otimes H \rightarrow
k$, $v: H \otimes A \rightarrow k$ such that $(A, p)$ and $(H,
\tau)$ are braided Hopf algebras, $u$ and $v$ are skew pairings on
$(A,\ H)$ respectively $(H,\ A)$ and the following compatibilities
are fulfilled:
\end{enumerate}
\begin{eqnarray*}
\eqlabel{t1} v(h_{(1)} , b_{(1)}) b_{(2)} \otimes h_{(2)} &{=}&
b_{(1)} v(h_{(2)} , b_{(2)}) \otimes h_{(1)}\\
\eqlabel{t2} a_{(1)} \otimes g_{(1)}u(a_{(2)} , g_{(2)}) &{=}&
a_{(2)} \otimes u(a_{(1)} , g_{(1)}) g_{(2)}\\
\eqlabel{t3} u(a_{(1)} , g) p(a_{(2)} , c) &{=}& p(a_{(1)}
, c) u(a_{(2)} , g)\\
\eqlabel{t4} \tau(h_{(1)} , g) v(h_{(2)} , c) &{=}& v(h_{(1)} , c)
\tau(h_{(2)} , g)\\
\eqlabel{t5} p(b , c_{(1)}) v(h , c_{(2)}) &{=}& v(h , c_{(1)})
p(b , c_{(2)})\\
\eqlabel{t6} u(b , t_{(1)}) \tau(h , t_{(2)}) &{=}& \tau(h ,
t_{(1)}) u(b , t_{(2)})
\end{eqnarray*}
and the braiding $\sigma$ is given by \equref{39}.
\end{corollary}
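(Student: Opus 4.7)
The plan is to obtain this corollary as an immediate specialization of \coref{4.7} by additionally requiring that the action $\triangleright$ be trivial, i.e.\ $h \triangleright a = \varepsilon_H(h)\,a$ for all $h \in H$, $a \in A$. With both $f$ and $\triangleright$ trivial, the multiplication \equref{001} of the crossed product collapses to $(a \# h)(b \# g) = ab \# hg$, so $A \#_{f}^{\triangleright} \, H$ is exactly the tensor product of Hopf algebras $A \ot H$ and the present statement fits into the framework of \thref{4.5}.

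The first step is to substitute $h \triangleright a = \varepsilon_H(h)\,a$ into each of the compatibilities \equref{4.1}, \equref{4.2} and \equref{4.4}--\equref{4.7} from \thref{4.5}. A single application of the counit axiom in each case shows that these reduce, respectively, to the six conditions (t1)--(t6) of the corollary. Relation \equref{4.3} was already observed in the proof of \coref{4.7} to be trivial when $f$ is trivial, and it remains so here.

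The second step is to verify that, with both $f$ and $\triangleright$ trivial, the ``skew'' axioms of the four maps $p$, $u$, $v$, $\tau$ collapse to their ordinary counterparts: conditions (RS3) and (LS1) for $u$ and $v$ become the skew-pairing axioms (BR3) and (BR1) on $(A,H)$ and $(H,A)$ respectively, as already noted in \coref{4.7} under the sole assumption that $f$ is trivial; for $\tau$, conditions (SBR1) and (SBR3) reduce to (BR1) and (BR3), while (SBR5) is unchanged, so that $\tau$ is a genuine braiding on $H$. The remaining axioms (RS2), (RS4), (LS2), (LS4), (SBR2), (SBR4) coincide with (BR2) and (BR4) verbatim, and the braiding formula \equref{39} for $\sigma$ is preserved.

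There is no genuine obstacle here: once \thref{4.5} (and its specialization \coref{4.7}) is in hand, the entire proof is a bookkeeping exercise in specialising the hypotheses and applying $\varepsilon$ to absorb $\triangleright$. The only point requiring a modicum of care is checking each of the six simplifications (t1)--(t6) individually rather than lumping them together, since the Sweedler indices of $h$ and $g$ shift slightly under the counit collapse.
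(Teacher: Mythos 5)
Your proposal is correct and follows exactly the paper's own route: the paper likewise deduces \coref{4.8} from \coref{4.7} by setting $h \triangleright a = \varepsilon(h)a$, whereupon the compatibilities \equref{4.1}--\equref{4.2} and \equref{4.4}--\equref{4.7} collapse to the six listed conditions and the skew pairings and skew braiding reduce to ordinary ones. Your write-up merely makes explicit the counit bookkeeping that the paper leaves implicit.
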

\begin{proof}
It follows from \coref{4.7} by considering $h \triangleright a =
\varepsilon(h)a$.
\end{proof}
\begin{remark}
It is easy to see that if, for example, $A$ and $H$ are both
cocommutative then the compatibilities in \coref{4.8} are
trivially fulfilled.
\end{remark}

To end with we construct some explicit braidings on the crossed
product of Hopf algebras.

\begin{examples}\exlabel{exemplulung}
$1)$ As a first example we describe, using \coref{4.8}, the
braidings on the tensor product $k[X] \otimes k[X]$, where $k[X]$
is the polynomial algebra. Recall that $k[X]$ is a commutative and
cocommutative Hopf algebra with the coalgebra structure and
antipode given by:
\begin{eqnarray*}
\Delta(X) = X \otimes 1 + 1 \otimes X, \quad \varepsilon(X) = 0,
\quad S(X) = -X
\end{eqnarray*}
Any element $\alpha \in k$, induces a skew pairing $\varphi$ on
$k[X]$ as follows:
$$\varphi(X^{i}, X^{j}) = \left \{\begin{array}{rcl}
0, \, & \mbox { if }& i \neq j\\
i!\alpha^{i}, \, & \mbox { if }& i=j
\end{array} \right.
$$
Moreover, since $k[X]$ is commutative and cocommutative the
braidings on $k[X]$ coincide with the skew pairings on $(k[X],\
k[X])$. Let $p$, $\tau$, $u$ and $v$ be the braidings induced by
$\alpha, \beta, \gamma, \tau \in k$. Then, according to
\coref{4.8} a braiding $\sigma$ on the tensor product $k[X]
\otimes k[X]$ is given by:
\begin{eqnarray*}
\sigma(X^{a} \otimes X^{b}, X^{c} \otimes X^{d}) &{=}&
\sum_{i = max\{0, a-c\}}^{min\{a, d\}}\binom{a}{i}\binom{b}{d-i}\binom{c}{a-i}\binom{d}{i}(c-a+i)!\\
&&(a-i)! (d-i)! i! \alpha^{a-i}\beta^{d-i}\gamma^{i}\tau^{c-a+i}
\end{eqnarray*}
if $a+b=c+d$ and $\sigma(X^{a} \otimes X^{b}, X^{c} \otimes X^{d})
= 0$ for $a+b \neq c+d$.\\
Moreover, all braidings on $k[X] \otimes k[X]$ arise in this way.

$2)$ In what follows $k$ is a field such that $2$ is invertible in
$k$. Let $H = k[C_{3}] = k <a \ |\ a^{3} = 1>$ be the group Hopf
algebra and $A = H_{4}$ be Sweedler's Hopf algebra. Recall that
$H_{4}$ is generated as an algebra by two elements $g$ and $x$
subject to relations:
$$
g^{2}=1, \ x^{2}=0, \ xg=-gx
$$
while the coalgebra structure and antipode are given by:
$$
\Delta(g) = g \otimes g, \ \Delta(x) = g \otimes x + x \otimes 1,
\ \varepsilon(g) = 1$$
$$\varepsilon(x) = 0, \ S(g) = g, \
S(x) = -gx.
$$
It is a straightforward computation to see that $A$ and $H$
together with the maps $\triangleright : H \otimes A \rightarrow
A$ and $f: H \otimes H \rightarrow A$ defined below is a crossed
system of Hopf algebras:
$$f(a,\ a) = f(a^{2},\ a^{2}) = g ~~{\rm and}~~ f(a^{i},\ a^{j}) = 1 ~~{\rm for}~~ (i,j) \notin \{(1,1), (2,2)\}$$
$$a \triangleright g = a^{2} \triangleright g = g,\ a \triangleright x = a^{2} \triangleright x = -x,\ a \triangleright gx = a^{2} \triangleright gx = -gx$$

In order to describe the braidings on the crossed product $H_{4}
\#_{f}^{\triangleright} \, k[C_{3}]$ we start by listing a braided
structure on $H_{4}$, left/right skew pairings and skew braidings.
For any $\alpha \in k$, $(H_{4},\ p)$ is a braided Hopf algebra,
where $p: H_{4} \otimes H_{4} \rightarrow k$ is given by:

\begin{center}
\begin{tabular} {l | r  r  r  r  }
$p$ & 1 & $g$ & $x$ & $gx$\\
\hline 1 & 1 & 1 & 0 & 0\\
$g$ & 1 & -1 & 0 & 0 \\
$x$ & 0 & 0 & $\alpha$ & $\alpha$\\
$gx$ & 0 & 0 & $\alpha$ & $\alpha$ \\
\end{tabular}
\end{center}

The linear map $u: H_{4} \otimes k[C_{3}] \rightarrow k$ defined
below is a $(p, f)$ - right skew pairing on $(H_{4}, k[C_{3}])$:
$$u(g,\ a) = u(g, \ a^{2}) = -1$$
$$u(x,\ a) = u(x,\ a^{2}) = u(gx,\ a) = u(gx,\ a^{2}) = 0$$
The linear map $v: k[C_{3}] \otimes H_{4} \rightarrow k$ defined
below is a  $(p, f)$ - left skew pairing on $(k[C_{3}], H_{4})$:
$$u(a,\ g) = u(a^{2},\ g) = -1$$
$$u(a,\ x) = u(a^{2},\ x) = u(a,\ gx) = u(a^{2},\ gx) = 0$$

For any $\gamma \in k$ such that $\gamma^{3} = 1$, the linear map
$\tau: k[C_{3}] \otimes k[C_{3}] \rightarrow k$ defined below is a
$(u, v)$ - skew braiding on $k[C_{3}]$:

\begin{center}
\begin{tabular} {l | r  r  r }
$\tau$ & 1 & $a$ & $a^{2}$\\
\hline 1 & 1 & 1 & 1\\
$a$ & 1 & $\gamma$ & $-\gamma^{2}$\\
$a^{2}$ & 1 & $-\gamma^{2}$ & $-\gamma$\\
\end{tabular}
\end{center}

Moreover, the maps $p$, $u$, $v$ and $\tau$ satisfy conditions
\equref{4.1} - \equref{4.7} in \thref{4.5}. Thus, $\sigma:
\bigl(H_{4} \#_{f}^{\triangleright} \, k[C_{3}]\bigl) \otimes
\bigl( H_{4} \#_{f}^{\triangleright} \, k[C_{3}] \bigl)
\rightarrow k$ is a braiding on the crossed product $H_{4}
\#_{f}^{\triangleright} \, k[C_{3}]$, where:
$$\sigma(b \otimes y,\ c \otimes z) = u(b_{(1)}, z_{(1)}) p(b_{(2)}, c_{(1)}) \tau(y_{(1)}, z_{(2)}) v(y_{(2)}, c_{(2)})$$
is given by:
\begin{center}
\begin{tabular} {l | r  r  r  r  r  r  }
$\sigma$\centering & 1\#1 & $1\#a$ & $1\#a^{2}$ & $g\#1$
& $g\#a$ & $g\#a^{2}$\\
\hline 1\#1\centering & 1 & 1 & 1 & 1 & 1 & 1\\
$1\#a$\centering & 1 & $\gamma$ & $-\gamma^{2}$ & $-1$ & $-\gamma$
&
$\gamma^{2}$ \\
$1\#a^{2}$\centering & 1 & $-\gamma^{2}$ & $-\gamma$ & $-1$ & $\gamma^{2}$ & $\gamma$\\
$g\#1$\centering & 1 & $-1$ & $-1$ & $-1$ & 1 & 1\\
$g\#a$\centering & 1 & $-\gamma$ & $\gamma^{2}$ & 1 & $-\gamma$ & $\gamma^{2}$\\
$g\#a^{2}$\centering & 1 & $\gamma^{2}$ & $\gamma$ & 1 &
$\gamma^{2}$ & $\gamma$\\
$x\#1$\centering & 0 & 0 & 0 & 0 & 0 & 0\\
$x\#a$\centering & 0 & 0 & 0 & 0 & 0 & 0\\
$x\#a^{2}$ \centering & 0 & 0 & 0 & 0 & 0 & 0\\
$gx\#1$ \centering & 0 & 0 & 0 & 0 & 0 & 0\\
$gx\#a$\centering & 0 & 0 & 0 & 0 & 0 & 0\\
$gx\#a^{2}$\centering & 0 & 0 & 0 & 0 & 0 & 0\\
\end{tabular}
\end{center}

\begin{center}
\begin{tabular} {l | r  r  r  r  r  r  }
$\sigma$\centering & $x\#1$ & $x\#a$ & $x\#a^{2}$ & $gx\#1$
& $gx\#a$ & $gx\#a^{2}$\\
\hline $1\#1$\centering & 0 & 0 & 0 & 0 & 0 & 0\\
$1\#a$\centering & 0 & 0 & 0 & 0 & 0 & 0\\
$1\#a^{2}$ \centering & 0 & 0 & 0 & 0 & 0 & 0\\
$g\#1$ \centering & 0 & 0 & 0 & 0 & 0 & 0\\
$g\#a$\centering & 0 & 0 & 0 & 0 & 0 & 0\\
$g\#a^{2}$\centering & 0 & 0 & 0 & 0 & 0 & 0\\
$x\#1$\centering & $\alpha$ & $-\alpha$ & $-\alpha$ & $-\alpha$ & $\alpha$ & $\alpha$\\
$x\#a$\centering & $\alpha$ & $-\alpha \gamma$ & $\alpha \gamma^{2}$ & $\alpha$ & $-\alpha \gamma$ & $\alpha \gamma^{2}$\\
$x\#a^{2}$ \centering & $\alpha$ & $\alpha \gamma^{2}$ & $\alpha \gamma$ & $\alpha$ & $\alpha \gamma^{2}$ & $\alpha \gamma$\\
$gx\#1$ \centering & $\alpha$ & $\alpha$ & $\alpha$ & $\alpha$ & $\alpha$ & $\alpha$\\
$gx\#a$\centering & $\alpha$ & $\alpha \gamma$ & $-\alpha \gamma^{2}$ & $-\alpha$ & $-\alpha \gamma$ & $\alpha \gamma^{2}$\\
$gx\#a^{2}$\centering & $\alpha$ & $-\alpha \gamma^{2}$ & $-\alpha \gamma$ & $-\alpha$ & $\alpha \gamma^{2}$ & $\alpha \gamma$\\
\end{tabular}
\end{center}

\end{examples}

\section{Acknowledgment}
The author is supported by an ''Aspirant'' Fellowship from the
Fund for Scientific Research-Flanders (Belgium) (F.W.O.
Vlaanderen). This research is part of the grant no. 88/05.10.2011
of the Romanian National Authority for Scientific Research,
CNCS-UEFISCDI.

\end{document}